\newtheorem{thm}{Theorem}[section]
\newtheorem{dfn}[thm]{Definition}
\newtheorem{claim}[thm]{Claim}
\newtheorem{lemma}[thm]{Lemma}
\newtheorem{prop}[thm]{Proposition}
\newtheorem{corollary}[thm]{Corollary}
\newtheorem{conj}[thm]{Conjecture}
\begin{document}

\title{Extremal problems of Erd\H{o}s, Faudree, Schelp and Simonovits on paths and cycles}

\date{}

\author{Binlong Li\thanks{$^a$School of Mathematics and Statistics, Northwestern Polytechnical University,
Xi'an, Shaanxi 710072, China. $^b$Xi'an-Budapest Joint Research Center for Combinatorics,
Northwestern Polytechnical University, Xi'an, Shaanxi, 710129, China.} \and Jie
Ma\thanks{School of Mathematical Sciences, University of Science and
Technology of China, Hefei, Anhui, 230026, China.} \and Bo Ning\thanks{College of Computer Science, Nankai
University, Tianjin, 300071, China. Email: bo.ning@nankai.edu.cn.} }

\maketitle

\begin{center}
\begin{abstract}
For positive integers $n>d\geq k$, let $\phi(n,d,k)$ denote the least
integer $\phi$ such that every $n$-vertex graph with at least $\phi$
vertices of degree at least $d$ contains a path on $k+1$ vertices.
Many years ago, Erd\H{o}s, Faudree, Schelp and Simonovits proposed
the study of the function $\phi(n,d,k)$,
and conjectured that for any positive integers $n>d\geq k$, it holds that
$\phi(n,d,k)\leq \lfloor\frac{k-1}{2}\rfloor\lfloor\frac{n}{d+1}\rfloor+\epsilon$,
where $\epsilon=1$ if $k$ is odd and $\epsilon=2$ otherwise.
In this paper we determine the values of the function $\phi(n,d,k)$ exactly.
This confirms the above conjecture of Erd\H{o}s et al. for all positive integers $k\neq 4$
and in a corrected form for the case $k=4$.
Our proof utilizes, among others, a lemma of Erd\H{o}s et al. \cite{EFSS89}, a theorem of Jackson \cite{J81},
and a (slight) extension of a very recent theorem of Kostochka, Luo and Zirlin \cite{KLZ},
where the latter two results concern maximum cycles in bipartite graphs.
Moreover, we construct examples to provide answers to two closely related questions raised by Erd\H{o}s et al.
\end{abstract}

%\smallskip
%\noindent{\bf Keywords:} Paths; high degree; bipartite graphs %Jackson's conjecture

\end{center}

\section{Introduction}
We consider the following extremal problem asked by Erd\H{o}s,
Faudree, Schelp and Simonovits in \cite{EFSS89}: for given
positive integers $n>d\geq k$, what is the minimum value $\ell$
such that every $n$-vertex graph with at least $\ell$ vertices
of degree at least $d$ contains a path $P_{k+1}$ on $k+1$ vertices?
The goal of the present paper is to provide a complete solution
for all positive integers $n>d\geq k$.

One of the best known results in extremal graph theory is the
Erd\H{o}s-Gallai Theorem \cite{EG59}, which states that any
$n$-vertex graph with more than $(k-1)n/2$ edges contains a
path on $k+1$ vertices. Since then, there have been many other
extremal results on the existence of long paths in graphs with
a large number of edges or vertices of high-degree. In this paper
we investigate the following function, the study of what was proposed
by Erd\H{o}s, Faudree, Schelp and Simonovits \cite{EFSS89}.

\begin{dfn}\label{Def:phi}
For positive integers $n>d\geq k$,
define $\phi(n,d,k)$ to be the smallest integer $\phi$ such that
every $n$-vertex graph with at least $\phi$ vertices of degree at
least $d$ contains a path $P_{k+1}$ on $k+1$ vertices.
\end{dfn}

In this language, the well-known theorem of Dirac \cite{D52} asserts that $\phi(n,d,d)\leq n$
and by that, we see that the function $\phi(n,d,k)$ is well-defined if and only if $d\geq k$.
A result of Bazgan, Li and Wo\'{z}niak \cite{BLW00} shows that $\phi(n,d,d)\leq \frac{n}{2}$.
For the general case, Erd\H{o}s, Faudree, Schelp and Simonovits \cite{EFSS89} announced that for any $k$,
there exists a constant $c$ such that if $n$ is large enough with respect to $k$,
then $\phi(n,d,k)\leq \lfloor \frac{k-1}{2}\rfloor \lfloor \frac{n}{d+1}\rfloor+c.$
They \cite{EFSS89} further made the following conjecture (also see \cite{FRS97}).

\begin{conj}[Erd\H{o}s, Faudree, Schelp and Simonovits, \cite{EFSS89}]\label{Conj:EFSS}
Let $n,d$ and $k$ be any positive integers with $n>d\geq k$.
Then $\phi(n,d,k)\leq \lfloor \frac{k-1}{2}\rfloor\lfloor\frac{n}{d+1}\rfloor+\epsilon$,
where $\epsilon=1$ if $k$ is odd and $\epsilon=2$ otherwise.
\end{conj}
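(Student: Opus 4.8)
The plan is to prove the equivalent contrapositive: every $n$-vertex graph $G$ containing no $P_{k+1}$ has fewer than $t\lfloor n/(d+1)\rfloor+\epsilon$ vertices of degree at least $d$, where $t:=\lfloor(k-1)/2\rfloor$. Before attacking the upper bound I would first record the matching construction, since it dictates exactly what the proof must achieve. Take $\lfloor n/(d+1)\rfloor$ vertex-disjoint copies of the complete split graph $K_t\vee\overline{K}_{d+1-t}$ (a clique on $t$ vertices joined to an independent set of size $d+1-t$), and distribute the remaining fewer than $d+1$ vertices arbitrarily. Each block spans $d+1$ vertices, its $t$ clique vertices have degree exactly $d$, and its longest path has $2t+1\leq k$ vertices, so the whole graph is $P_{k+1}$-free with exactly $t\lfloor n/(d+1)\rfloor$ vertices of degree $\geq d$. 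This fixes the extremal shape, shows the bound is tight up to the additive $\epsilon$, and already exposes the parity phenomenon behind $\epsilon$: an alternating cycle through $t+1$ of these centers would have length $2t+2$, which equals $k+1$ when $k$ is odd but only $k$ when $k$ is even, so the even case carries one extra unit of slack (and, as the corrected statement signals, a single exceptional configuration at $k=4$).

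For the upper bound the key reduction is to translate ``long path'' into ``long alternating cycle in a bipartite graph whose one side is the set $D$ of degree-$\geq d$ vertices''. The point is that a cycle passing through $s$ vertices of $D$, alternating with other vertices, has length $2s$ and hence contains $P_{2s}$; since a high-degree vertex on such a cycle has a spare neighbour, one typically upgrades this to $P_{2s+1}$. Thus exhibiting a suitable bipartite cycle through $t+1$ (resp.\ $t+2$) vertices of $D$ produces $P_{k+1}$ and a contradiction. To build these cycles I would feed the auxiliary bipartite graph into the two cycle theorems quoted in the introduction: Jackson's theorem \cite{J81} supplies a long cycle in the $2$-connected, minimum-degree regime, while the (extended) theorem of Kostochka, Luo and Zirlin \cite{KLZ} supplies a cycle covering a prescribed set of high-degree vertices. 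The extension is forced on us because in our setting the degrees are controlled only on the $D$-side (each such vertex has degree $\geq d\geq k$, which is large relative to $t$), whereas the opposite side may contain low-degree vertices, so the hypotheses of the original theorem must be relaxed accordingly.

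With the cycle theorems in hand, the argument becomes a clustering-and-counting scheme. Using the lemma of Erd\H{o}s et al.\ \cite{EFSS89} to reduce to the relevant (essentially $2$-connected) fragments and to control cut-vertices, I would show that the vertices of $D$ split into groups, each containing at most $t$ of them, such that distinct groups occupy disjoint vertex sets and each group together with the private part of its neighbourhoods spans at least $d+1$ vertices---exactly the behaviour of a block $K_t\vee\overline{K}_{d+1-t}$. Any group with more than $t$ high-degree vertices would, through the bipartite cycle theorems, yield a cycle long enough to contain $P_{k+1}$. Since the occupied sets are disjoint and $G$ has $n$ vertices, there are at most $\lfloor n/(d+1)\rfloor$ groups, whence $|D|\leq t\lfloor n/(d+1)\rfloor$, plus a bounded surplus from the fewer than $d+1$ leftover vertices that accounts for $\epsilon$.

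I expect the principal difficulty to lie precisely in this clustering step and in pinning down the exact value of $\epsilon$. Guaranteeing that the groups' occupied sets are genuinely disjoint---rather than merely of the correct total size---requires a careful treatment of how high-degree vertices in different fragments share neighbours, and it is here that the connectivity reductions via the Erd\H{o}s et al.\ lemma and the precise degree thresholds in the extended Kostochka--Luo--Zirlin theorem must be combined. The parity bookkeeping---distinguishing $2t+2=k+1$ (odd $k$) from $2t+2=k$ (even $k$), handling the pendant-vertex extension that promotes a length-$k$ cycle to $P_{k+1}$, and isolating the single extra configuration that makes $k=4$ genuinely exceptional---is where the corrected form of the conjecture should emerge, and I anticipate that the $k=4$ case will require a separate, ad hoc analysis.
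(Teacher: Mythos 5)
Your plan has two genuine gaps, one fatal to the stated goal and one at the heart of the counting. First, the statement you set out to prove is \emph{false} for $k=4$, and the paper refutes it rather than proving it: the double-star $H^{*}_{d,4}$ (two disjoint stars on $d$ vertices with their centers joined) packs $2$ vertices of degree $d$ into only $2d$ vertices while containing no $P_5$, so $q$ disjoint copies give an $n$-vertex graph with $n=2qd$ and $2q$ high-degree vertices, which exceeds $\lfloor n/(d+1)\rfloor+2$ once $q>\frac{d+3}{2}$. The paper's Theorem~\ref{Thm:Main}(ii)(b) pins $\phi(n,d,4)$ at $2q+1$ or $2q+2$ with $n=2qd+r$, and its Corollary states explicitly that Conjecture~\ref{Conj:EFSS} holds for $k\neq 4$ but fails for $k=4$. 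You sense the $k=4$ anomaly (``corrected form''), but your contrapositive target $|D|<t\lfloor n/(d+1)\rfloor+\epsilon$ is simply unprovable there, and your matching construction misses the graphs $H^{*}_{d,k}$ entirely --- for every even $k\geq 4$ these put $k/2=t+1$ high-degree vertices into a single connected $P_{k+1}$-free fragment on $2d+2-\frac{k}{2}$ vertices, which is exactly why the even case carries $\epsilon=2$ (not, as you suggest, a ``bounded surplus from the fewer than $d+1$ leftover vertices'').

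Second, your clustering-and-counting scheme --- partitioning $D$ into groups of at most $t$ high vertices whose occupied sets are disjoint and each span at least $d+1$ vertices --- is asserted, not proved, and the $H^{*}_{d,k}$ example above shows no such partition with that granularity can exist for even $k$: $t+1$ high vertices can legitimately share a fragment on fewer than $2(d+1)$ vertices. This step is precisely the difficulty, and the paper does not attempt any global decomposition. Instead it takes a counterexample minimizing first $n$ and then the number of edges, from which it gets for free that neighbors of low-degree vertices have degree exactly $d$ (Claim~\ref{Cl:NeighborLower}), that $G$ is connected (Claim~\ref{Cl:Connected}, disconnectedness being handled by minimality of $n$, with careful case analysis of the remainders $r_1,r_2$), and the key deletion tool Claim~\ref{Cl:NSHighVertices}: removing any $d+1$ vertices must leave at least $\lfloor\frac{k+1}{2}\rfloor$ high vertices in their closed neighborhood, else a smaller counterexample appears. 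The base case $d+1\leq n\leq 2d+1$ is then handled essentially as you propose (bipartite graph on high vertices versus the rest, cycles covering $X$ converted to few disjoint paths via Lemma~\ref{Lem:Bipa-Path}, merged by the Erd\H{o}s et al.\ lemma into a high-end path on $k-1$ vertices, contradicting Claim~\ref{Cl:HighEndPath}), but for $n\geq 2d+2$ the paper runs a delicate extremal-path analysis (Claims~\ref{Cl:PathDegreeS}--\ref{Cl:VertexinP}) culminating in a $2$-cut $\{v,v^{+}\}$ and a counting contradiction via Claim~\ref{Cl:NSHighVertices} --- nothing resembling your grouping argument. One smaller correction: your stated reason for extending Kostochka--Luo--Zirlin is off. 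Their Theorem~\ref{Thm:KLZ} already imposes degree conditions only on the $X$-side; the extension the paper actually needs (Lemma~\ref{Lem:Refine-KLZ}) relaxes $2$-connectivity to essential $2$-connectivity, because the auxiliary graph built by adding an apex joined to all of $X$ over a spanning tree is only essentially $2$-connected, and this costs the slightly stronger hypothesis $|X|\leq d-1$.
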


Much attention in \cite{EFSS89} was devoted to the special case when $d+1\leq n\leq 2d+1$.
In this case, the authors \cite{EFSS89} showed that approximately $k/2$ vertices of
degree at least $d$ are enough to ensure the existence of $P_{k+1}$.
They also commented that ``unfortunately, even for this interval of values
we are not able to prove the exact statement of the conjecture."
However, as we shall see later, this case (i.e., $d+1\leq n\leq 2d+1$)
is a major difficulty that we face and where several novel ideas
take place in our proof.

Our main result determines the function $\phi(n,d,k)$ completely in the following statement.

\begin{thm}\label{Thm:Main}
For any positive integers $n,d$ and $k$ with $n>d\geq k$, the followings are true:\\
(i) If $k$ is odd, then
$\phi(n,d,k)=\frac{k-1}{2}q+1$, where $n=q(d+1)+r$ with $0\leq r\leq d$.\\
(ii) If $k$ is even, then

(a) for $k=2$, $\phi(n,d,2)=1$;

(b) for $k=4$, $\phi(n,d,4)=\left\{\begin{array}{ll}
  2q+1, & 0\leq r\leq d;\\
  2q+2, & d<r<2d,
\end{array}\right.$ where $n=2qd+r$ with $0\leq r<2d$;

(c) for $k\geq 6$, $\phi(n,d,k)=\left\{\begin{array}{ll}
  \frac{k-2}{2}q+1, & 0\leq r\leq d-\frac{k}{2};\\
  \frac{k-2}{2}q+2, & d-\frac{k}{2}<r\leq d,
\end{array}\right.$ where $n=q(d+1)+r$ with $0\leq r\leq d$.
\end{thm}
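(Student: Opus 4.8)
The plan is to establish the exact value by proving matching lower and upper bounds in each case. For the lower bounds I would, in every case, construct an $n$-vertex graph having exactly one fewer than the claimed number of vertices of degree at least $d$ and containing no $P_{k+1}$. The natural building block is the complete split graph $K_s\vee\overline{K_t}$, the join of a clique of size $s$ with an independent set of size $t$: its longest path has exactly $2s+1$ vertices, its $s$ clique vertices have degree $s+t-1$, and its independent vertices have degree $s$. Placing such a block on $d+1$ vertices with $s=\lfloor(k-1)/2\rfloor$ makes the clique vertices have degree exactly $d$ while keeping the longest path at $2s+1\le k$; a disjoint union of $q$ such blocks (being disconnected) still has longest path at most $k$ and supplies $\lfloor(k-1)/2\rfloor q$ high-degree vertices. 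The remaining $r\le d$ vertices cannot, on their own, form a high-degree vertex (a component on at most $d$ vertices has maximum degree $<d$), which already yields the odd-$k$ bound $\frac{k-1}{2}q+1$. For even $k$ the longest path of a split block is odd, so there is a parity slack of one vertex below $k+1$; exploiting this slack to attach one extra high-degree vertex to the leftover block --- which is possible precisely when $r$ is large enough --- is what produces the constant $\epsilon=2$, the threshold $r\le d-\frac{k}{2}$ versus $r>d-\frac{k}{2}$ for $k\ge6$, and the corrected form at $k=4$. These constructions are elementary but require exactly the case analysis visible in the statement.

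The substance is the upper bound: every $n$-vertex graph $G$ with at least the claimed number of high-degree vertices contains $P_{k+1}$. I would argue by contradiction, assuming that $G$ has no $P_{k+1}$ --- so its circumference is at most $k$ and its longest path has at most $k$ vertices --- while its set $A$ of vertices of degree at least $d$ is as large as claimed, and then force $|A|$ to be too small. Writing $n=q(d+1)+r$, the announced asymptotic estimate of Erd\H{o}s et al.\ already controls large $q$; as they themselves remarked, the genuine difficulty is the range $d+1\le n\le 2d+1$, i.e.\ $q=1$. For $q\ge2$ I would reduce to this base case by repeatedly peeling off a ``block'' --- a piece on about $d+1$ vertices carrying roughly $\lfloor(k-1)/2\rfloor$ of the high-degree vertices --- and inducting on $q$; the lemma of Erd\H{o}s et al.\ \cite{EFSS89} is exactly the device I would invoke both to locate such a block and to bound how many high-degree vertices it can hold without creating $P_{k+1}$.

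For the base case I would convert a long path into a long cycle in an auxiliary bipartite graph. Given the high-degree set $A$, form the bipartite graph $H$ between $A$ and $V(G)\setminus A$ with the edges of $G$ across this cut: an alternating cycle of length $2m$ in $H$ is a cycle, hence a path $P_{2m}$, in $G$, so the absence of $P_{k+1}$ forces every such cycle to satisfy $2m\le k$. Because each vertex of $A$ has degree at least $d$ and $n\le 2d+1$, the $A$-side degrees in $H$ are large relative to $|V(G)\setminus A|$, which is precisely the regime in which Jackson's theorem \cite{J81} and the theorem of Kostochka, Luo and Zirlin \cite{KLZ} produce a cycle through all, or almost all, of $A$. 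Capping the length of that cycle below $k$ then bounds $|A|$ by about $(k-1)/2$, matching the $q=1$ value. Whether the extremal alternating subgraph ends on the $A$-side or on the other side is governed by the parity of $k$, and this is the origin both of the constant $\epsilon$ and of the exceptional behaviour at $k=4$.

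I expect the main obstacle to be making this bipartite-cycle step \emph{exact} rather than asymptotic in the tight window $n\le 2d+1$: the off-the-shelf theorems yield a long cycle but not with the razor-thin threshold needed to separate $\frac{k-1}{2}q+1$ from $\frac{k-1}{2}q$. This is why I anticipate needing the slight strengthening of \cite{KLZ} advertised in the abstract --- to guarantee a cycle covering \emph{all} of $A$ under exactly the degree and size conditions available here --- and why the even-$k$ cases, where a single cycle's even length does not immediately deliver the odd path $P_{k+1}$, require sharpened, parity-sensitive arguments. The delicate reconciliation of the boundary cases in $r$, together with the separate treatment that the corrected $k=4$ statement demands, is where I would expect to spend the bulk of the effort.
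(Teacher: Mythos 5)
Your lower-bound sketch matches the paper's constructions ($K_s\vee \overline{K_t}$ with $s=\lfloor\frac{k-1}{2}\rfloor$ is exactly their $H_{d,k}$, and the ``parity slack'' attachment is their $H^*_{d,k}$), and you correctly identify the base case $d+1\le n\le 2d+1$ and the relevance of Jackson and Kostochka--Luo--Zirlin. But your base-case mechanism has a genuine gap: you propose to find a single cycle through all of $A$ in the bipartite graph $G(A,V\setminus A)$ and ``cap'' its length at $k$. No such cycle need exist --- nothing forces $G(A,V\setminus A)$ to be $2$-connected or even connected, and the degrees of $A$ in this bipartite graph are only $\ge d-|A|+1$, so neither Jackson's hypothesis ($|Y|\le 2d'-2$) nor KLZ's directly applies; a cycle that fails to exist bounds nothing. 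The paper's actual device (Lemma~\ref{Lem:Bipa-Path}) adds $t$ auxiliary vertices joined to all of $X$ to restore connectivity and degree, invokes an \emph{essentially-$2$-connected} variant of KLZ (Lemma~\ref{Lem:Refine-KLZ}), and after deleting the auxiliaries obtains only a family of at most $t+1$ \emph{disjoint paths} covering $X$. The indispensable further step, which your outline omits at this point, is the Erd\H{o}s--Faudree--Schelp--Simonovits merging lemma (Lemma~\ref{Thm:EFSS-Path}): precisely because $n\le 2d+1$, these disjoint high-end paths merge into one high-end path on $\ge 2|X|-(t+1)\ge k-1$ vertices, which then extends to $P_{k+1}$ because its endpoints have degree $\ge d\ge k$ (Claim~\ref{Cl:HighEndPath}). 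The choice $t=1$ (odd $k$) versus $t=2$ (even $k$, $r>d-\frac{k}{2}$) is where the constant $\epsilon$ actually comes from --- not from which side an alternating subgraph ends on; and $k=4$ is settled by a separate direct classification of components into double-stars, not by bipartite parity.

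The second gap is the range $n\ge 2d+2$. ``Repeatedly peeling off a block on about $d+1$ vertices and inducting on $q$'' has no mechanism in a connected $P_{k+1}$-free graph, and you misplace the EFSS lemma here: it applies only to graphs on at most $2d+1$ vertices, so it cannot locate blocks when $n$ is large. What survives of your induction in the paper is only Claim~\ref{Cl:NSHighVertices}: by minimality of the counterexample, deleting any $d+1$ vertices must leave a non-counterexample, whence every set $T$ with $|T|\ge d+1$ has $T\cup N(T)$ containing at least $\lfloor\frac{k+1}{2}\rfloor$ high-degree vertices. The bulk of the proof is then a structural analysis you do not anticipate: choose a high-end path $P$ maximizing its number of high-degree vertices (then its length), study the end-neighborhoods $S_1,S_2$ outside $P$, show $N(S)$ has at most one consecutive pair $v,v^+$ on $P$, deduce via the counting in \eqref{equ:N(S)} that $k$ is even and $|P|=k-2$ with $N(S)$ alternating along $P$, and finally exhibit a $2$-cut $\{v,v^+\}$ and a set $S'$ of at least $d+1$ vertices whose closed neighborhood carries only $\frac{k}{2}-1$ high-degree vertices, contradicting Claim~\ref{Cl:NSHighVertices}. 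So while your architecture points in the right direction and you honestly flag the exactness issue, the two load-bearing ideas --- disjoint-path covering merged by the EFSS lemma in the base case, and the extremal-path/2-cut argument replacing naive block-peeling for $n\ge 2d+2$ --- are missing, and the proof does not go through as proposed.
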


We see immediately that
$\phi(n,d,k)=\lfloor\frac{k-1}{2}\rfloor \lfloor\frac{n}{d+1}\rfloor+1$ when $k$ is odd,
and
$\phi(n,d,k)\leq\lfloor\frac{k-1}{2}\rfloor
\lfloor\frac{n}{d+1}\rfloor+2$ when $k\neq 4$ is even.
However, the case $k=4$ is different.
In summary, we have the following.
\begin{corollary}
Conjecture~\ref{Conj:EFSS} is true for any integer $k\neq 4$ and false for $k=4$.
\end{corollary}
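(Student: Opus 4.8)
The plan is to read the corollary off Theorem~\ref{Thm:Main} directly, since that theorem already determines $\phi(n,d,k)$ exactly; the only task is to compare these exact values with the conjectured bound $B(n,d,k):=\lfloor\frac{k-1}{2}\rfloor\lfloor\frac{n}{d+1}\rfloor+\epsilon$ and to locate the single regime where they disagree. I would organize the comparison by the parity of $k$, being careful that Theorem~\ref{Thm:Main} uses the decomposition $n=q(d+1)+r$ (so that $q=\lfloor n/(d+1)\rfloor$) in every case \emph{except} $k=4$, where it instead uses $n=2qd+r$.

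For odd $k$, we have $\lfloor\frac{k-1}{2}\rfloor=\frac{k-1}{2}$ and $\epsilon=1$, so Theorem~\ref{Thm:Main}(i) gives $\phi=\frac{k-1}{2}q+1=B(n,d,k)$; the conjecture holds, in fact with equality. For even $k\neq 4$ we have $\lfloor\frac{k-1}{2}\rfloor=\frac{k-2}{2}$ and $\epsilon=2$. When $k=2$, part (ii)(a) gives $\phi=1\le 2=B(n,d,2)$. When $k\ge 6$, part (ii)(c) gives $\phi\in\{\frac{k-2}{2}q+1,\ \frac{k-2}{2}q+2\}$ with $q=\lfloor n/(d+1)\rfloor$, so $\phi\le\frac{k-2}{2}q+2=B(n,d,k)$. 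Thus the conjecture holds for every $k\neq 4$.

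The crux is $k=4$, and the mismatch of parametrizations is precisely what defeats the conjecture. Here I would search for pairs $(n,d)$ with $d<r<2q$ in the representation $n=2qd+r$: part (ii)(b) then forces $\phi=2q+2$, while $n=2qd+r<2qd+2q=2q(d+1)$ gives $\lfloor n/(d+1)\rfloor\le 2q-1$, hence $B(n,d,4)=\lfloor n/(d+1)\rfloor+2\le 2q+1<\phi$. A clean explicit instance is $d=4$, $q=3$, $r=5$, i.e.\ $n=29$: since $4<5<8$ we get $\phi(29,4,4)=8$, whereas $B=\lfloor 29/5\rfloor+2=7$. More generally any $d\ge 4$ with $\tfrac{d}{2}+1\le q\le d$ and $d<r<2q$ yields such a counterexample, so the conjecture is false for $k=4$.

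There is no deep obstacle here, as Theorem~\ref{Thm:Main} carries all the weight; the only genuine care required is in reconciling the two distinct parametrizations of $n$ and in tracking the boundary behaviour of the floor function $\lfloor n/(d+1)\rfloor$ across the interval $d<r<2q$, which is exactly where the $k=4$ discrepancy is hidden. I would therefore make sure the inequalities $ (2q-1)(d+1)\le n<2q(d+1)$ are verified for the chosen family before asserting $\lfloor n/(d+1)\rfloor=2q-1$.
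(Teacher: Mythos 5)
Your proposal is correct and follows essentially the same route as the paper: the corollary is read off directly from Theorem~\ref{Thm:Main} by comparing the exact values of $\phi(n,d,k)$ with the conjectured bound $\lfloor\frac{k-1}{2}\rfloor\lfloor\frac{n}{d+1}\rfloor+\epsilon$, with the $k=4$ failure located exactly where the parametrization $n=2qd+r$ with $d<r<2q$ makes $\lfloor n/(d+1)\rfloor\le 2q-1$ while $\phi=2q+2$. Your explicit witness $(n,d)=(29,4)$, giving $\phi(29,4,4)=8>7=\lfloor 29/5\rfloor+2$, checks out and usefully makes concrete what the paper leaves implicit, as do your verifications for odd $k$, for $k=2$, and for even $k\ge 6$.
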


Our proof of Theorem~\ref{Thm:Main} is inductive in its nature.
For that it is crucial for us to manage the base case when $d+1\leq n\leq 2d+1$.
It turns out in the proof of the base case that we make use of results
on maximum cycles in bipartite graphs due to Jackson \cite{J81}
and Kostochka, Luo and Zirlin \cite{KLZ} (see Theorems~\ref{Thm:Jackson}
and \ref{Thm:KLZ}, respectively). To be precise, we partition
the vertex set into two parts $X$ and $Y$, where $X$ consists of
vertices of degree at least $d$, and then apply the above results
on maximum cycles to find a small number of disjoint paths in the
bipartite subgraph $G(X,Y)$ to cover all vertices in $X$;
finally, an application of a lemma of Erd\H{o}s et al.
in \cite{EFSS89} (see Lemma~\ref{Thm:EFSS-Path}) will ensure
the desired long path. We would like to point out that it
seems to be an incredible coincidence that the bounds we need
in this argument are exactly what the recent result of Kostochka,
Luo and Zirlin \cite{KLZ} provided. The case when $n\geq 2d+2$
will be handled differently, which is essentially reduced to the base case.

The rest of this paper is organized as follows.
In Section~\ref{sec:ext-graph}, we construct extremal graphs for
the function $\phi(n,d,k)$ and establish the lower bound of Theorem~\ref{Thm:Main}.
In Section~\ref{Sec:Preliminaries}, we introduce the notation, a lemma of Erd\H{o}s et al.,
and results of Jackson and Kostochka, Luo and Zirlin on maximum cycles in bipartite graphs;
we also provide some variants and extension of these cycle results for the coming proof.
In Section~\ref{Sec:Thm:Pr-Main}, we complete the proof of Theorem \ref{Thm:Main}.
In Section \ref{Sec:ErdosProb}, we give better constructions to answer
two questions in \cite{EFSS89} which are closely related to Conjecture~\ref{Conj:EFSS}.

\section{Extremal graphs}\label{sec:ext-graph}

In this section, we construct extremal graphs for the function $\phi(n,d,k)$.
This will give the matched lower bound of $\phi(n,d,k)$ in Theorem~\ref{Thm:Main}.

We start with some notation. Let $n> d\geq k$ be positive integers and
let $G, H$ be two graphs. Throughout this paper, the word {\it disjoint}
always means for {\it vertex-disjoint} unless otherwise specified.
By $G+H$ we mean the disjoint union of $G$ and $H$, and we use $k\cdot G$
to denote the union of $k$ disjoint copies of the same graph $G$.
Let $K_n$ be the $n$-vertex clique, $I_n$ be the graph induced by an
independent set of $n$ vertices, and $K_{1,n}$ be the star with $n$ leaves.
We define two special yet important graphs as following (see Figure~\ref{Fig:Hdk}):
\begin{itemize}
\item The graph $H_{d,k}$ is obtained from the disjoint union of
$K_{\lfloor\frac{k-1}{2}\rfloor}$ and $I_{d+1-\lfloor\frac{k-1}{2}\rfloor}$
by joining every vertex of $K_{\lfloor\frac{k-1}{2}\rfloor}$ to every vertex
of $I_{d+1-\lfloor\frac{k-1}{2}\rfloor}$.
\item For even integers $k\geq 4$, let $H_{d,k}^*$ be the graph obtained from
$H_{d,k}$ by adding a disjoint copy of $I_{d+1-\frac{k}{2}}$ and joining every
vertex in $I_{d+1-\frac{k}{2}}$ to a fixed vertex of degree $\frac{k}{2}-1$ in $H_{d,k}$.
\end{itemize}
Note that $H_{d,k}$ has $d+1$ vertices in total and $\lfloor\frac{k-1}{2}\rfloor$ vertices of degree at least $d$,
while $H^*_{d,k}$ has $2d+2-\frac{k}2$ vertices in total and $\frac{k}{2}$ vertices
of degree at least $d$. In particular, $H^*_{d,4}$ is the graph obtained from
two disjoint stars on $d$ vertices by joining the two centers (we will also call
it a {\it double-star}).

With the above notation, now we define the extremal graph $G(n,d,k)$ for the function $\phi(n,d,k)$.

\begin{dfn}\label{Def:Hdk}
For positive integers $n> d\geq k$,
we define the graph $G(n,d,k)$ as follows.

\begin{itemize}
\item For $k\in \{1,2\}$, let $G(n,d,k)=I_n$.

\item For $k=4$, write $n=2qd+r$ with $0\leq r<2d$ and let
$$G(n,d,k)=\left\{\begin{array}{ll} q\cdot H^*_{d,4}+ I_r,  & \mbox{if } r\leq d;\\
q\cdot H^*_{d,4}+K_{1,d}+I_{r-d-1},  & \mbox{otherwise.}
\end{array}\right.$$

\item For odd $k\geq 3$, write $n=q(d+1)+r$ with $0\leq r\leq d$ and let $G(n,d,k)=q\cdot H_{d,k}+I_r$.

\item For even $k\geq 6$, write $n=q(d+1)+r$ with $0\leq r\leq d$
and let $$G(n,d,k)=\left\{\begin{array}{ll} q\cdot H_{d,k}+I_r,  & \mbox{if } r\leq d-\frac{k}{2};\\
(q-1)\cdot H_{d,k}+H_{d,k}^*+I_{r-d+\frac{k}{2}-1},  & \mbox{otherwise.}
\end{array}\right.$$
\end{itemize}
\end{dfn}

It is straightforward to check the following fact on $G(n,d,k)$.

\begin{lemma}\label{Lem:Ext-graphs}
For any positive integers $n>d\geq k$,
the $n$-vertex graph $G(n,d,k)$ contains no $P_{k+1}$ and thus the lower bound of Theorem~\ref{Thm:Main} holds.
\end{lemma}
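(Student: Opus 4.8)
The plan is to verify two claims for each graph $G(n,d,k)$ in Definition~\ref{Def:Hdk}: first, that it has exactly $n$ vertices, and second, that it contains no path on $k+1$ vertices. Once the second claim is established, the number of vertices of degree at least $d$ in $G(n,d,k)$ equals the conjectured value of $\phi(n,d,k)$ minus one, so no graph with that many high-degree vertices is guaranteed to contain $P_{k+1}$, which is precisely the lower bound of Theorem~\ref{Thm:Main}. I would handle the cases of Definition~\ref{Def:Hdk} one by one, since each is a disjoint union of building blocks and the longest path in a disjoint union is the longest path in any single component.

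The key structural observation, which I would isolate first, is the behavior of longest paths in the building blocks $H_{d,k}$ and $H_{d,k}^*$. For $H_{d,k}$, write $s=\lfloor\frac{k-1}{2}\rfloor$; this graph is the join $K_s + I_{d+1-s}$, a complete split graph. Any path alternates in a controlled way between the clique part and the independent part, and since the independent set has no internal edges, a longest path can visit at most $s+1$ independent vertices interleaved with the $s$ clique vertices, giving a longest path on at most $2s+1$ vertices. I would check that $2s+1 = 2\lfloor\frac{k-1}{2}\rfloor+1 \leq k$ in both parities (equality when $k$ is odd, and $2s+1=k-1$ when $k$ is even), so $H_{d,k}$ contains no $P_{k+1}$. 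For $H_{d,k}^*$, the extra independent set $I_{d+1-\frac{k}{2}}$ attaches only to a single vertex of degree $\frac{k}{2}-1$ in $H_{d,k}$; a longest path can use this pendant structure to gain at most one additional edge/vertex at one end, and I would argue that the longest path still has at most $k$ vertices. The double-star $H_{d,4}^*$ is the clean illustrating case: its longest path has $4$ vertices, hence no $P_5$.

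With the component analysis in hand, the verification for each case of Definition~\ref{Def:Hdk} is routine bookkeeping. For $k\in\{1,2\}$ the graph $I_n$ has no edges, so its longest path has one vertex, excluding $P_2$ and $P_3$. For odd $k$ the graph is $q\cdot H_{d,k}+I_r$, and since every component avoids $P_{k+1}$, so does the union; I would also confirm the vertex count $q(d+1)+r=n$ and that exactly $\frac{k-1}{2}q$ vertices have degree at least $d$, matching $\phi(n,d,k)-1=\frac{k-1}{2}q$. The even cases $k=4$ and $k\geq 6$ are analogous, using $H_{d,k}^*$ as an additional component; here I would take care with the two subcases of the piecewise definitions, matching the ranges of $r$ against the claimed formula, and tracking that each $H_{d,k}^*$ contributes $\frac{k}{2}$ high-degree vertices while each $H_{d,k}$ contributes $\frac{k-1}{2}=\frac{k-2}{2}$ (for even $k$).

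I expect the main obstacle to be the careful determination of the longest path in $H_{d,k}^*$, and correspondingly the justification that attaching the pendant independent set $I_{d+1-\frac{k}{2}}$ cannot extend a longest path past $k$ vertices. The subtlety is that the attachment vertex has degree $\frac{k}{2}-1$ inside $H_{d,k}$, which is deliberately chosen so that a path entering through the pendant set, passing through the attachment vertex, and continuing into $H_{d,k}$ still cannot reach $k+1$ vertices; verifying this requires bounding the path contribution on the $H_{d,k}$ side conditioned on which vertex serves as the endpoint. Everything else reduces to the vertex-count identities and the elementary fact that the longest path in a complete split graph $K_s + I_t$ has $\min\{2s+1,\, s+t\}$ vertices, both of which are mechanical to check once the right quantities are written down.
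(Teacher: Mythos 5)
Your proposal is correct, and it fills in exactly the verification the paper leaves implicit: the paper states Lemma~\ref{Lem:Ext-graphs} with no written proof (``It is straightforward to check''), and your component-by-component check is the intended argument. Your key bounds are right --- the longest path in the complete split graph $K_s + I_t$ with $s=\lfloor\frac{k-1}{2}\rfloor$ and $t=d+1-s\geq s+1$ has $2s+1\leq k$ vertices, and in $H_{d,k}^*$ a path using the pendant set gains at most one vertex beyond a path of $H_{d,k}$ starting at the attachment vertex (or is the trivial $3$-vertex path through two pendants), giving at most $2s+2=k$ vertices --- and the vertex counts and high-degree counts in all cases of Definition~\ref{Def:Hdk} match $\phi(n,d,k)-1$ as you describe.
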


\begin{figure}[h]
    \begin{center}
        \begin{picture}(350,150)

        \newcommand{\tuoyuan}[2]{\qbezier(#1,0)(#1,#2)(0,#2)
            \qbezier(0,#2)(-#1,#2)(-#1,0) \qbezier(-#1,0)(-#1,-#2)(0,-#2)
            \qbezier(0,-#2)(#1,-#2)(#1,0)}

        \put(140,0){\put(-120,75){\thinlines\tuoyuan{10}{45}}
            \put(-130,75){\line(1,1){20}}
            \put(-130,65){\line(1,1){20}}
            \put(-130,55){\line(1,1){20}}
            \put(-130,85){\line(1,1){18}}
            \put(-129,95){\line(1,1){16}}
            \put(-128,105){\line(1,1){12}}
            \put(-128,45){\line(1,1){17}}
            \put(-126,35){\line(1,1){16}}

            \multiput(-60,30)(0,15){3}{\circle*{4}}
            \multiput(-60,90)(0,15){3}{\circle*{4}}
            \multiput(-60,70)(0,5){3}{\circle*{1}}
            \multiput(-120,35)(0,20){2}{\circle*{4}}
            \multiput(-120,65)(0,10){3}{\circle*{1}}
            \multiput(-120,95)(0,20){2}{\circle*{4}} \put(-60,30){\line(-12,1){60}}
            \put(-60,30){\line(-12,5){60}} \put(-60,30){\line(-12,13){60}}
            \put(-60,30){\line(-12,17){60}} \put(-60,45){\line(-6,-1){60}}
            \put(-60,45){\line(-6,1){60}} \put(-60,45){\line(-6,5){60}}
            \put(-60,45){\line(-6,7){60}} \put(-60,60){\line(-12,-5){60}}
            \put(-60,60){\line(-12,-1){60}} \put(-60,60){\line(-12,7){60}}
            \put(-60,60){\line(-12,11){60}}
            %\put(0,75){\line(-3,-2){60}}
            %\put(0,75){\line(-3,-1){60}} \put(0,75){\line(-3,1){60}}
            %\put(0,75){\line(-3,2){60}}
            \put(-60,90){\line(-12,-11){60}}
            \put(-60,90){\line(-12,-7){60}} \put(-60,90){\line(-12,1){60}}
            \put(-60,90){\line(-12,5){60}} \put(-60,105){\line(-6,-7){60}}
            \put(-60,105){\line(-6,-5){60}} \put(-60,105){\line(-6,-1){60}}
            \put(-60,105){\line(-6,1){60}} \put(-60,120){\line(-12,-17){60}}
            \put(-60,120){\line(-12,-13){60}} \put(-60,120){\line(-12,-5){60}}
            \put(-60,120){\line(-12,-1){60}} \put(-170,72){$K_{\lfloor \frac{k-1}{2} \rfloor}$}
            \put(-90,0){$H_{d,k}$}
            \put(-65,72){$\left.
                \begin{aligned}
                ~ \\
                ~ \\
                ~ \\
                ~ \\

                ~
                \end{aligned}
                \right\} d+1-\lfloor \frac{k-1}{2} \rfloor$}
        }

    ~~~~~~~~~~~~~

        \put(200,0){\put(-10,75){\thinlines\tuoyuan{10}{45}}
            \multiput(50,15)(0,15){3}{\circle*{4}}
            \multiput(50,105)(0,15){3}{\circle*{4}}
            \put(50,75){\circle*{4}}
            \multiput(50,85)(0,5){3}{\circle*{1}}
            \multiput(50,55)(0,5){3}{\circle*{1}}
            \multiput(-10,35)(0,20){2}{\circle*{4}}
            \multiput(-10,65)(0,10){3}{\circle*{1}}
            \multiput(-10,95)(0,20){2}{\circle*{4}}
            \multiput(90,15)(0,15){3}{\circle*{4}}
            \multiput(90,105)(0,15){2}{\circle*{4}}
            \multiput(90,61)(0,13){3}{\circle*{1}}

            \put(-20,75){\line(1,1){20}}
            \put(-20,65){\line(1,1){20}}
            \put(-20,55){\line(1,1){20}}
            \put(-20,85){\line(1,1){18}}
            \put(-19,95){\line(1,1){16}}
            \put(-18,105){\line(1,1){12}}
            \put(-18,45){\line(1,1){17}}
            \put(-16,35){\line(1,1){16}}

            \put(50,30){\line(-12,1){60}} \put(50,30){\line(-12,5){60}}
            \put(50,30){\line(-12,13){60}} \put(50,30){\line(-12,17){60}}
            \put(50,45){\line(-6,-1){60}} \put(50,45){\line(-6,1){60}}
            \put(50,45){\line(-6,5){60}} \put(50,45){\line(-6,7){60}}
            \put(50,15){\line(-12,8){60}} \put(50,15){\line(-12,4){60}}
            \put(50,15){\line(-12,16){60}} \put(50,15){\line(-12,20){60}}
            \put(50,75){\line(-3,-2){60}} \put(50,75){\line(-3,-1){60}}
            \put(50,75){\line(-3,1){60}} \put(50,75){\line(-3,2){60}}

            %\put(90,135){\line(-12,-11){60}} \put(90,135){\line(-12,-7){60}}
            %\put(90,135){\line(-12,1){60}} \put(90,135){\line(-12,5){60}}
            \put(50,135){\line(-12,-16){60}} \put(50,135){\line(-12,-20){60}}
            \put(50,135){\line(-12,-8){60}} \put(50,135){\line(-12,-4){60}}
            \put(50,105){\line(-6,-7){60}} \put(50,105){\line(-6,-5){60}}
            \put(50,105){\line(-6,-1){60}} \put(50,105){\line(-6,1){60}}
            \put(50,120){\line(-12,-17){60}} \put(50,120){\line(-12,-13){60}}
            \put(50,120){\line(-12,-5){60}} \put(50,120){\line(-12,-1){60}}

            \put(50,75){\line(12,-18){40}} \put(50,75){\line(12,-9){40}}
            \put(50,75){\line(12,-14){40}}
            %\put(50,75){\line(12,18){40}}
            \put(50,75){\line(12,9){40}} \put(50,75){\line(12,14){40}}
            \put(-55,72){$K_{\frac{k}{2}-1}$} \put(55,0){$H_{d,k}^*$}
            %\put(40,145){$d+1-\lfloor \frac{k-1}{2} \rfloor$}
            %\put(120,145){$d+1-\frac{k}{2}$}
            \put(85,63){$\left.
                \begin{aligned}
                ~ \\
                ~ \\
                ~ \\
                ~ \\
                ~\\
                ~
                \end{aligned}
                \right\} d+1-\frac{k}{2}$}
        }
        \end{picture}
        \caption{$H_{d,k}$ (for all $k$) and $H_{d,k}^*$ (for even $k$)}
        \label{Fig:Hdk}
    \end{center}
\end{figure}

\section{Preliminaries and some results on bipartite graphs}\label{Sec:Preliminaries}
Let $G$ be a graph. For disjoint subsets $X, Y\subseteq V(G)$,
we use $G(X,Y)$ to denote the bipartite subgraph of $G$ induced by two parts $X$ and $Y$.
Let $P$ be a path or a cycle.
By $|P|$, we mean the number of vertices in $P$.
For $x,y\in V(P)$, let $xPy$ be a subpath of $P$ between $x$ and $y$.
When $P$ is associated with an orientation, the successor and predecessor of $x$ along the direction are denoted by
$x^+$ and $x^-$ (if they exist), respectively. We also denote by $x^{++}:=(x^+)^+$ and $x^{--}:=(x^-)^-$.
For a subset $S\subseteq V(G)$, we define $N(S)$ to be the set of all vertices $x\in V(G)\backslash S$ which is adjacent to some vertex in $S$.
If $S$ consists of a single vertex $x$, then we write $N(S)$ as $N(x)$.

We now introduce two theorems on the existence of maximum cycles in bipartite graphs,
which provide crucial tools for the proof of our main result.
The first one is due to Jackson \cite{J81}.

\begin{thm}[\rm {Jackson, \cite[Theorem~1]{J81}}]\label{Thm:Jackson}
Let $G$ be a bipartite graph with two parts $X$ and $Y$.
If $2\leq |X|\leq d$, $|Y|\leq 2d-2$, and every vertex in $X$ has degree at least $d$,
then $G$ has a cycle containing all vertices in $X$.
\end{thm}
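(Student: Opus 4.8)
The plan is to argue by contradiction using an extremal cycle together with a rotation–extension (P\'osa-type) analysis, with the engine being a simple common-neighbour estimate. First I would record the two structural facts that the hypotheses buy us. Since $G$ is bipartite, every cycle alternates between $X$ and $Y$, so a cycle through all of $X$ must have exactly $2|X|$ vertices, using $|X|$ vertices of $Y$; as each vertex of $X$ has at least $d$ neighbours (all in $Y$) we get $|Y|\ge d\ge |X|$, so such a cycle is not excluded on counting grounds. The crucial observation is that for any two vertices $x,x'\in X$,
\[
|N(x)\cap N(x')|\ \ge\ |N(x)|+|N(x')|-|Y|\ \ge\ 2d-(2d-2)\ =\ 2 ,
\]
so any two vertices of $X$ share at least two common neighbours in $Y$. (In particular the case $|X|=2$ is immediate: $x,x'$ together with two common neighbours form a $4$-cycle.) I would then assume $|X|\ge 3$ and that no cycle covers $X$, and choose a cycle $C$ with $|V(C)\cap X|=:r$ maximum and, subject to that, of maximum length; write $C=x_1y_1x_2y_2\cdots x_ry_rx_1$ and pick $x_0\in X\setminus V(C)$.

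Next comes the rotation–extension step. A na\"ive single insertion — replacing a segment $x_i\,y_i\,x_{i+1}$ of $C$ by $x_i\,a\,x_0\,b\,x_{i+1}$ with $a\in N(x_i)\cap N(x_0)$ and $b\in N(x_0)\cap N(x_{i+1})$ — would raise the $X$-coverage by one and contradict maximality, and the common-neighbour estimate guarantees candidates for $a$ and $b$; the difficulty is that these common neighbours may all already lie on $C$, since when $|Y|=2d-2$ there can be very few off-cycle $Y$-vertices. To get around this I would run P\'osa rotations: starting from the alternating path obtained by attaching $x_0$ to $C$ at one of its neighbours, repeatedly rotate to produce a large family of endpoints, and show that at each stage one of three things must happen — the path extends, $x_0$ gets inserted into a cycle covering more of $X$, or a longer cycle through the current endpoints is closed. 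Here the $\ge 2$ common-neighbour property is what keeps the rotations from stalling, because it supplies, for every pair of $X$-vertices, a spare common neighbour that a crossing-chord argument can exploit.

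Finally I would extract the contradiction by counting. At a terminal configuration a standard crossing-chord/independence argument (the bipartite analogue of P\'osa's lemma) forces the neighbourhoods of the rotation endpoints to be confined to $V(C)\cap Y$, a set of only $r\le |X|-1\le d-1$ vertices; since every vertex of $X$ has at least $d$ neighbours, this is impossible, and more quantitatively the degree sum it demands overruns the bound $|Y|\le 2d-2$. I expect the main obstacle to be exactly this tight regime $|Y|=2d-2$ with almost all of $Y$ lying on $C$: there one must track carefully how many of the guaranteed common neighbours sit on $C$ versus off it, and organise the rotations so that the few off-cycle $Y$-vertices are used decisively rather than wasted. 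The small values of $|X|$ and the initial attachment of $x_0$ should be dispatched by the direct $4$-cycle observation and a short case check.
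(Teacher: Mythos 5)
Two preliminary remarks. First, the paper does not prove this statement at all: it is imported verbatim from Jackson \cite{J81}, so there is no in-paper proof to match; the closest machinery the paper actually uses is the tight-pair analysis it quotes from Kostochka, Luo and Zirlin as Lemma~\ref{lem:KLZ}. Second, your opening reductions are sound: the inclusion--exclusion bound $|N(x)\cap N(x')|\ge 2d-(2d-2)=2$, the $4$-cycle disposing of $|X|=2$, and the observation that bipartite cycles alternate, so a cycle $C$ with $r$ vertices of $X$ has exactly $r$ vertices of $Y$, with $r\le |X|-1\le d-1$ when $C$ is a non-spanning extremal cycle.

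The genuine gap is that the decisive steps are asserted rather than proved, and the terminal counting step fails as written. Maximality under P\'osa rotations confines an endpoint's neighbourhood to the $Y$-vertices of the \emph{path}, not of the cycle $C$: the rotation path contains $x_0$, its attachment vertices, and possibly off-cycle vertices of $Y$, of which there may be up to $(2d-2)-r\ge d-1$. So an endpoint of degree exactly $d$ can have all its neighbours on the path with none of your three events (extend, insert, close) triggered, and no contradiction with $d(x)\ge d$ or with $|Y|\le 2d-2$ arises; obtaining confinement to $V(C)\cap Y$ is precisely the hard content of Jackson's argument (and of the tight-pair lemmas in \cite{KLZ}), which fixes a longest cycle together with an outside vertex $x$ maximizing $d_C(x)$ and analyses chord positions case by case, rather than running a generic rotation. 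Two further asserted steps do not hold automatically: closing ``a longer cycle through the current endpoints'' contradicts your two-tier choice of $C$ only if the new cycle still covers all of $V(C)\cap X$, which a chord-closed cycle through a subpath need not do; and the pairwise bound of two common neighbours cannot by itself prevent stalling, since in the tight regime $|Y|=2d-2$, $r=d-1$ all guaranteed common neighbours of $x_0$ and the cycle vertices may lie on $C$ --- exactly the obstacle you flag at the end but do not resolve. In short, the proposal shares Jackson's extremal-cycle starting point, but the rotation trichotomy and the final confinement claim are the theorem's actual difficulty, and the latter is false in the generality in which you invoke it.
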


The following theorem, conjectured by Jackson \cite{J81} and proved by Kostochka, Luo and Zirlin \cite{KLZ} recently,
strengthens the above theorem of Jackson for 2-connected graphs.

\begin{thm}[\rm {Kostochka et al., \cite[Theorem~1.6]{KLZ}}]\label{Thm:KLZ}
Let $G$ be a 2-connected bipartite graph with two parts $X$ and $Y$.
If $2\leq |X|\leq d$, $|Y|\leq 3d-5$, and every vertex in $X$ has degree at least $d$,
then $G$ has a cycle containing all vertices in $X$.
\end{thm}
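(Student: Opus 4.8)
The plan is to argue by contradiction through an extremal-cycle argument sharpened by P\'osa-type rotations, in which the hypothesis of $2$-connectivity is exactly what buys the improvement from Jackson's bound $2d-2$ (Theorem~\ref{Thm:Jackson}) to $3d-5$. Suppose $G$ has no cycle covering all of $X$. Among all cycles of $G$, choose one, $C$, meeting $X$ in as many vertices as possible, and subject to that of maximum length $|C|$. Since $G$ is bipartite, $C$ alternates between $X$ and $Y$ and uses equally many vertices of each side, and by assumption some $x_0 \in X \setminus V(C)$ exists. Orienting $C$, the aim is to show that the neighborhoods forced by the condition $\deg(x) \ge d$, once $x_0$ is attached to $C$, cannot fit into a $Y$-side of size at most $3d-5$, contradicting the maximality of $C$.

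Before the main argument I would dispose of the thin range: if $|Y| \le 2d-2$, then Theorem~\ref{Thm:Jackson} already produces the required cycle with no use of connectivity, so I may assume $2d-1 \le |Y| \le 3d-5$, which is the genuinely new regime. For the setup, all neighbors of $x_0$ lie in $Y$; write $A = N(x_0) \cap V(C)$, and note that because each successor $y^+$ and predecessor $y^-$ of a vertex $y \in A$ lies in $X$, the vertices of $A$ come with two ``flanking'' families in $X \cap V(C)$. Using $2$-connectivity, I would also fix two internally disjoint paths from $x_0$ to $C$ ending at distinct vertices $a_1, a_2 \in V(C)$ with interiors off $C$; these split $C$ into two arcs, each of which must be ``$X$-heavy'', since rerouting $C$ through $x_0$ along either arc would otherwise cover at least as many $X$-vertices, contradicting the choice of $C$.

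The core of the proof is a crossing (hopping-lemma) analysis in the spirit of Woodall. If $x_0$ admitted a local insertion or short reroute near two of its neighbors in $A$, one could produce a cycle covering strictly more of $X$ (or the same $X$ with larger length), contradicting maximality; encoding this non-reroutability turns the flanking $X$-vertices of $A$ into \emph{forbidden} configurations, and each forbidden configuration blocks a specific vertex of $Y$ from being reused. A single such family already accounts for roughly $2d$ vertices of $Y$, which is essentially Jackson's bound. The extra leverage comes from the second attachment point supplied by $2$-connectivity: running the same rotation bookkeeping from $a_1$ and from $a_2$ yields a second, nearly disjoint batch of forbidden $Y$-vertices, pushing the total to about $3d$.

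The hard part will be the tight counting that pins the constant at exactly $3d-5$ rather than a weaker value. This requires showing that the two batches of forbidden vertices overlap in only a bounded number of places, and carefully auditing the degenerate cases: when the two attachment points $a_1, a_2$ are close along $C$, when rotation endpoints coincide, when $A$ meets the interiors of the two arcs unevenly, and when a $Y$-vertex would be charged twice. Handling these overlaps is precisely where the small additive corrections come from, and managing them simultaneously for both batches is the delicate bookkeeping at the heart of the argument. Once the count certifies $|Y| \ge 3d-4$, it contradicts $|Y| \le 3d-5$, and the assumed non-covering cycle cannot exist.
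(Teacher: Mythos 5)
First, a point of orientation: the paper does not prove Theorem~\ref{Thm:KLZ} at all --- it is imported as a black box from Kostochka, Luo and Zirlin \cite{KLZ}, where it resolves a 1981 conjecture of Jackson, and the only proof material reproduced in the paper is Lemma~\ref{lem:KLZ}, a list of intermediate statements extracted from the KLZ argument (together with the variant Lemma~\ref{Lem:Refine-KLZ} that the authors prove themselves). So there is no in-paper proof for your proposal to match, and the fair comparison is against the structure visible in Lemma~\ref{lem:KLZ}. Measured against that, your proposal has the right general silhouette (extremal cycle, an uncovered $x_0\in X$, a count forcing $|Y|\geq 3d-4$), but it is a plan rather than a proof, and the gap is exactly where you flag it yourself (``the hard part will be the tight counting'').

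Two concrete failures. First, your extremal object lacks the parameter that drives the real argument: KLZ work with a \emph{tight pair} $(C,x)$ --- a longest cycle together with $x\in X\setminus V(C)$ maximizing $d_C(x)=|N(x)\cap V(C)|$ --- and the entire proof is a case split on $d_C(x)$ (this is precisely Lemma~\ref{lem:KLZ}(i)--(iii)). Your secondary maximization (length, after $X$-coverage) is vacuous in a bipartite graph, since $|C|=2|V(C)\cap X|$, and nothing in your setup records how strongly $x_0$ attaches to $C$. Second, and more seriously, the saturated case $d_C(x)=c=|C|/2$, in which $x_0$ is adjacent to \emph{every} $Y$-vertex of $C$, cannot be handled by any count of ``forbidden $Y$-vertices'': there are no local insertions near two neighbors to forbid, and $|Y|$ need not be large in that configuration (pendant trees hanging off $C$ realize it; the example following Lemma~\ref{Lem:Refine-KLZ} shows such graphs genuinely occur and even force the stronger hypothesis $|X|\leq d-1$ in the essentially-2-connected variant). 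In that case the KLZ conclusion is structural, not numerical --- each $x_i\in X\cap V(C)$ with a neighbor off $C$ is a cut-vertex (Lemma~\ref{lem:KLZ}(iii)) --- and it is there, not in your two-batch bookkeeping at the attachment points $a_1,a_2$, that 2-connectivity is ultimately cashed in. Your sketch makes no provision for this case, and even in the cases it does address, the claims that each ``forbidden configuration'' blocks a distinct $Y$-vertex and that the two batches are ``nearly disjoint'' are asserted, not established; as written, the proposal names the theorem's difficulty without resolving it.
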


Our proof actually needs some intermediate statements from the proof of \cite{KLZ}.
Let us give some notation used in \cite{KLZ} first.
Let $G$ be bipartite with parts $X$ and $Y$ which is not a forest.
For a cycle $C$ and a vertex $x$ in $G$,
we say $(C,x)$ is a {\it tight pair} if $C$ is a longest cycle in $G$, $x\in X\backslash V(C)$,
and subject to these, $d_C(x):=|N(x)\cap V(C)|$ is maximum.
Clearly $G$ has a tight pair if and only if
$G$ has no cycle containing all vertices in $X$.
The followings are collected from the proof of Theorem~1.6 in \cite{KLZ}.

\begin{lemma}[\rm {Kostochka et al., \cite{KLZ}}]\label{lem:KLZ}
Let $G$ be a bipartite graph with two parts $X$ and $Y$ such that $|X|\leq d\leq\min\{d(x): x\in X\}$.
Let $(C,x)$ be a tight pair in $G$ with $c=|C|/2$. Then the followings hold:
\begin{itemize}
\item [(i)] If $d_C(x)\leq 1$ and there is a path connecting two vertices
in $C$ and passing through $x$, then $|Y|\geq 3d-4$ (see Case 1 in the proof of Theorem~1.6 in \cite{KLZ});
\item [(ii)] If $2\leq d_C(x)<c$, then $|Y|\geq 3d-4$ (i.e., Lemma~2.6 in \cite{KLZ});
\item [(iii)] If $d_C(x)=c$, then for each $x_i\in X\cap V(C)$ and each $y\in N_{G-C}(x_i)$, $x_i$ is a cut-vertex separating $y$ from $V(C)-x_i$ (i.e., Lemma~2.7 in \cite{KLZ}).
\end{itemize}
\end{lemma}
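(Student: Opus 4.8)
The plan is to recognize Lemma~\ref{lem:KLZ} as a faithful repackaging of three internal steps in the proof of Theorem~\ref{Thm:KLZ} (namely Case~1, Lemma~2.6 and Lemma~2.7 of \cite{KLZ}), and to derive each of (i)--(iii) from the longest-cycle rotation machinery for bipartite graphs while checking that the only hypotheses actually used are $|X|\le d\le\min\{d(x):x\in X\}$ and the existence of a tight pair. First I would fix notation tied to the tight pair $(C,x)$: since $G$ is bipartite, $C$ alternates between $X$ and $Y$, so $|C|=2c$ with $|X\cap V(C)|=|Y\cap V(C)|=c$, and every neighbour of $x$ on $C$ lies in $Y$. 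Write $W=N(x)\cap V(C)=\{y_1,\dots,y_t\}$ in cyclic order, where $t=d_C(x)$, and record that $|N(x)|\ge d$ already forces $|Y|\ge d$; the whole game is then to locate $2d-4$ further vertices of $Y$.

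For parts (i) and (ii) the engine is P\'osa-type rotation. Orienting $C$, I would consider the successor set $W^+=\{y_i^+\}$ and predecessor set $W^-=\{y_i^-\}$ (both lying in $X\cap V(C)$) together with the off-cycle neighbours $N(x)\setminus V(C)$. Maximality of $|C|$ forbids certain shortcuts through $x$, and, more importantly, maximality of $d_C(x)$ over all tight pairs forbids certain edges from $W^+\cup W^-$ and from the off-cycle neighbours of $x$ back into $C$ (otherwise a rotation would produce a cycle of the same length missing some $x'\in X$ with $d_C(x')>d_C(x)$). Assembling these forbidden-edge constraints, I would exhibit two further blocks of $Y$-vertices --- one harvested from the arcs immediately following $W$ and one from the off-cycle part reachable from $x$ --- each of size about $d-2$ and essentially disjoint from $N(x)$ and from each other, which sums to $|Y|\ge 3d-4$. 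Case (i), where $d_C(x)\le 1$ but a path joins two vertices of $C$ through $x$, is then handled by the same counting applied to that path in place of the rotation family.

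For part (iii), $d_C(x)=c$ means $x$ is adjacent to \emph{every} $Y$-vertex of $C$. This lets me substitute $x$ for any $x_i\in X\cap V(C)$: rerouting $y_{i-1}\to x\to y_i$ yields a cycle $C'$ of the same length $2c$ with $x_i$ now off-cycle and $d_{C'}(x_i)\ge 2$, so $(C',x_i)$ is again a longest cycle through a candidate vertex. If some $y\in N_{G-C}(x_i)$ were joined to a vertex of $V(C)-x_i$ by a path $P$ avoiding $x_i$, then splicing $P$ into $C'$ through $x_i$ and $y$ would either strictly lengthen the cycle --- contradicting maximality of $|C|$ --- or create a vertex with more than $c$ neighbours on a longest cycle, contradicting tightness. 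Hence no such $P$ exists and $x_i$ separates $y$ from $V(C)-x_i$, which is exactly the asserted cut-vertex property.

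The hard part will be the accounting in (ii): the estimates of \cite{KLZ} live inside a global argument that assumes $2$-connectivity and $|Y|\le 3d-5$, so the real work is to isolate the purely local consequences of a tight pair and verify they hold under the weaker hypotheses stated here, with no appeal to $2$-connectivity. Care is needed to guarantee the three $Y$-blocks are genuinely disjoint --- the unavoidable overlaps at the arc endpoints are precisely what degrade a naive bound of $3d$ to $3d-4$ --- and to confirm at the outset that $G$ is not a forest, so that a tight pair is well defined whenever $G$ has no cycle covering $X$.
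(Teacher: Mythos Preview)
The paper does not supply its own proof of Lemma~\ref{lem:KLZ}: it is stated as an attributed lemma, with each of (i)--(iii) explicitly pointed to a specific location in \cite{KLZ} (Case~1 of the proof of their Theorem~1.6, their Lemma~2.6, and their Lemma~2.7 respectively), and then used as a black box in the proof of Lemma~\ref{Lem:Refine-KLZ}. So there is nothing to compare your argument to --- the paper's ``proof'' is simply the citation.

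Your proposal goes beyond what the paper does by trying to sketch the actual arguments behind the three cited items. The outlines you give are in the right spirit --- P\'osa-type rotation for (i) and (ii), and a substitution of $x$ for $x_i$ on $C$ for (iii) --- and you correctly identify that the delicate part is the disjointness bookkeeping in (ii) that turns a naive $3d$ into $3d-4$. But as written these are plans, not proofs: in (ii) you do not actually exhibit the two extra blocks of $Y$-vertices or verify their near-disjointness, and in (iii) your contradiction clause (``either strictly lengthen the cycle \dots\ or create a vertex with more than $c$ neighbours on a longest cycle'') needs a genuine case split on whether the far endpoint of $P$ lands in $X\cap V(C)$ or $Y\cap V(C)$, and on whether $x$ lies on $P$. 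None of this is wrong, but none of it is yet a proof either. For the purposes of the present paper, the citation to \cite{KLZ} is all that is required; if you want a self-contained argument you will need to reproduce the full counting from \cite{KLZ}, not just gesture at it.
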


Let $G$ be a connected graph which is not a forest.
We say that $G$ is {\it essentially-2-connected}, if $G-V_1$
is 2-connected, where $V_1$ denotes the set of vertices of degree one in $G$.
We need a variance of Theorem~\ref{Thm:KLZ} for essentially-2-connected graphs.

\begin{lemma}\label{Lem:Refine-KLZ}
Let $G$ be an essentially 2-connected bipartite graph with parts $X$ and $Y$.
Suppose that $2\leq |X|\leq d-1$, $|Y|\leq 3d-5$, and every vertex in $X$
has degree at least $d$. Then $G$ has a cycle containing all
vertices in $X$.
\end{lemma}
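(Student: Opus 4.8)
The plan is to reduce Lemma~\ref{Lem:Refine-KLZ} to Theorem~\ref{Thm:KLZ} by passing to the $2$-connected core of $G$. Let $V_1$ denote the set of degree-one vertices of $G$, and set $G'=G-V_1$, which is $2$-connected by the definition of essentially-$2$-connected. Every vertex of $V_1$ is a leaf, and since each vertex of $X$ has degree at least $d\geq k\geq 2$ (in particular degree at least $2$), no vertex of $X$ lies in $V_1$; hence $X\subseteq V(G')$ and the bipartition of $G'$ is $(X,Y')$ with $Y'=Y\setminus V_1\subseteq Y$. The key point is to control how much the degrees and the size of the $Y$-side shrink when we delete $V_1$.

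First I would show that deleting $V_1$ does not decrease the degree of any vertex of $X$. Each $v\in V_1$ is adjacent to exactly one vertex of $G$; if that neighbour lies in $X$, then deleting $v$ lowers that $X$-vertex's degree by one, so this is exactly the degree loss we must bound. The natural move is to account for the leaves attached to $X$ separately: write $x\in X$ and suppose $x$ has $\ell_x$ leaf-neighbours in $V_1$, so in $G'$ its degree is $d_G(x)-\ell_x\geq d-\ell_x$. To apply Theorem~\ref{Thm:KLZ} to $G'$ I need every vertex of $X$ to have degree at least some threshold $d'$ in $G'$ while $|X|\le d'$ and $|Y'|\le 3d'-5$. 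The hypothesis $|X|\le d-1$ buys exactly one unit of slack, which I expect is what makes the argument go through: I would aim to take $d'=d-1$ (or track the worst-case leaf loss) and verify $|X|\le d'$, $|Y'|\le |Y|\le 3d-5\le 3d'-5+3$ — so the leaf deletion must be shown to remove enough $Y$-vertices to compensate. Concretely, leaves attached to $X$ sit in $Y$, so each unit of degree lost from an $X$-vertex is accompanied by the removal of one $Y$-vertex; this coupling between degree loss and $|Y'|$ reduction is the mechanism I would exploit.

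The main obstacle, then, is the bookkeeping showing that the simultaneous shrinkage of the minimum $X$-degree and of $|Y'|$ stays balanced so that the inequality $|Y'|\le 3d'-5$ holds with the effective parameter $d'=d_{\min}':=\min\{d_{G'}(x):x\in X\}$. If some $X$-vertex loses $\ell$ leaves, then $d'_{\min}\ge d-\ell_{\max}$ where $\ell_{\max}$ is the maximum leaf-count over $X$, and at least $\ell_{\max}$ vertices of $Y$ (the leaves of that vertex) are deleted, giving $|Y'|\le |Y|-\ell_{\max}\le 3d-5-\ell_{\max}\le 3(d-\ell_{\max})-5+2\ell_{\max}$; the clean bound $3d'_{\min}-5$ follows once one checks $\ell_{\max}$ deletions suffice, using the extra slack $|X|\le d-1$ to absorb the boundary case $\ell_{\max}=0$ where $G'=G$ might only satisfy $|X|\le d$ with equality barred. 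After Theorem~\ref{Thm:KLZ} yields a cycle $C$ in $G'$ through all of $X$, that same $C$ is a cycle in $G$ containing every vertex of $X$, since $C$ avoids the deleted leaves, completing the proof. I would double-check the single edge case where $X$ induces a degenerate structure, but the hypothesis $|X|\ge 2$ rules out the trivial degeneracies.
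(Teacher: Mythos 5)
There is a genuine gap: the reduction to Theorem~\ref{Thm:KLZ} as a black box cannot be made to work, and your own bookkeeping already exposes why. Since every vertex of $X$ has degree at least $d\geq 3$, all leaves lie in $Y$ and are attached to $X$-vertices, so deleting $V_1$ costs each $x\in X$ its leaf-count $\ell_x$ in degree while removing only $\sum_{x}\ell_x$ vertices from $Y$. To invoke Theorem~\ref{Thm:KLZ} in $G'=G-V_1$ with effective parameter $d'=d-\ell_{\max}$ you need \emph{both} $|X|\leq d'$ and $|Y'|\leq 3d'-5$. The first already fails whenever a single $X$-vertex carries $\ell_{\max}\geq 2$ leaves, since $|X|\leq d-1$ gives only one unit of slack; the second fails because lowering $d'$ by one tightens the $Y$-bound by three, while each unit of degree loss is compensated by only one deleted $Y$-vertex --- this is exactly the $2\ell_{\max}$ excess in your display $|Y'|\leq 3(d-\ell_{\max})-5+2\ell_{\max}$, and no hypothesis absorbs it. The failure is not a boundary technicality: take $G'$ to be $K_{X,Y_1}$ with $|X|=d-1$, $|Y_1|=d-1$, except that one vertex $x_0\in X$ is joined to only two vertices of $Y_1$ and instead carries $d-2$ pendant leaves. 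Then $G$ satisfies all hypotheses of the lemma (and its conclusion holds), yet $\min_{x\in X}d_{G'}(x)=2$, so no choice of $d'$ makes Theorem~\ref{Thm:KLZ} applicable to $G'$. So any proof must use the degree hypothesis in $G$ together with the fact that the leaves themselves are counted in $|Y|\leq 3d-5$, which a parameter-shifted application of the theorem cannot see.

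This is why the paper does not reduce to Theorem~\ref{Thm:KLZ} but instead reruns the tight-pair machinery from its proof (Lemma~\ref{lem:KLZ}): taking a tight pair $(C,x)$ with $c=|C|/2$, the cases $d_C(x)<c$ give $|Y|\geq 3d-4$ directly, and in the extremal case $d_C(x)=c$ one uses essential $2$-connectivity and cycle-exchange arguments to show every vertex of $Y\setminus V(C)$ is a leaf, after which the leaves are counted \emph{one-for-one inside} $|Y|$: each $x\in X$ has at least $d-c$ private leaf-neighbours, so $|Y|\geq c+|X|(d-c)\geq c(d-c)+d\geq 3d-4$, where the hypothesis $|X|\leq d-1$ enters precisely to guarantee $c\leq |X|-1\leq d-2$ (and the remark after the lemma shows this is sharp). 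Your final step (a cycle in $G'$ through $X$ is a cycle in $G$) is fine, but the core quantitative step of your plan does not close.
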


\begin{proof}
Suppose for a contradiction that $G$ has no cycle containing all vertices in $X$.
Let $(C,x)$ be a tight pair of $G$ and $c=|C|/2$. So $c<|X|$.
Since every vertex in $X$ has degree at least $d\geq 3$,
we see that the vertices in $X\cup V(C)$ are contained in the 2-connected subgraph $G-V_1$
and thus there is a path connecting two vertices in $C$ and passing through $x$.
If $d_C(x)<c$, then by Lemma~\ref{lem:KLZ} (i) and (ii), we get $|Y|\geq 3d-4$, a contradiction.
So we assume that $d_C(x)=c$, i.e., $x$ is adjacent to all vertices in $Y\cap V(C)$.

We claim that every vertex $y\in Y\backslash V(C)$ has degree one in $G$.
Suppose otherwise that there exists some $y\in Y\backslash V(C)$ with $d(y)\geq 2$.
Since $G$ is essentially 2-connected,
there is a path $P$ connecting two vertices in $C$, passing through $y$, and internally disjoint from $C$.
By Lemma~\ref{lem:KLZ} (iii), the end-vertices of $P$ are both in $Y\cap V(C)$ and $|P|\geq 5$ as $G$ is bipartite.
Let $y_1,y_2$ be the end-vertices of $P$.
If there exists a subpath $Q$ between $y_1$ and $y_2$ in $C$ of length two,
then replacing $Q$ with the path $P$ in $C$, we can get a longer cycle than $C$, a contradiction.
Fix an orientation of $C$. Then we have that $y_2\neq y_1^{++}$ (and also $y_1\neq y_2^{++}$) in $C$.
If $x\in V(P)$, then $y\in V(xPy_i)$ for some $i\in \{1,2\}$.
Without loss of generality, suppose that $y\in V(xPy_1)$.
Replacing $y_1y_1^+y_1^{++}$ with the path $y_1Px\cup xy_1^{++}$ in $C$,
again we have a longer cycle than $C$, a contradiction.
So $x\notin V(P)$. Let $C'$ be the cycle obtained from $C$ by deleting
the edges in $y_1y_1^+y_1^{++}\cup y_2y_2^+y_2^{++}$ and adding the paths $P$ and $y_1^{++}xy_2^{++}$.
Then $C'$ is a longer cycle than $C$, a contradiction. This proves the claim.

Note that every vertex in $X$ has at least $d-c$ neighbors outside $C$.
By the previous claim, these neighbors all have degree one in $G$ and
thus are distinct for different vertices in $X$.
This shows that $|Y|\geq c+|X|(d-c)\geq c+(c+1)(d-c)=c(d-c)+d$.
Since $2\leq c\leq |X|-1\leq d-2$,
now we can infer that $|Y|\geq 3d-4$, a contradiction.
This completes the proof of the lemma.
\end{proof}

We remark that the condition $|X|\leq d-1$ in Lemma~\ref{Lem:Refine-KLZ}
cannot be relaxed to $|X|\leq d$ because of the following examples.
Let $H=H(X,Y_1)$ be a complete bipartite graph with $|X|=d$ and $|Y_1|=d-1$.
Let $G$ be the bipartite graph obtained from $H$ by adding at
least one new vertex $x'$ for each vertex $x\in X$ and then adding
the edge $xx'$ for every new vertex $x'$.
Let $Y$ be the part of $G$ other than $X$.
Then the size of $Y$ can be any integer at least $2d-1$,
every vertex in $X$ has degree at least $d$ in $G$,
and $G$ is essentially 2-connected but has no cycle containing all vertices in $X$.

The following lemma will be pivotal for the proof of our main result Theorem \ref{Thm:Main}.

\begin{lemma}\label{Lem:Bipa-Path}
Let $G$ be a bipartite graph with parts $X$ and $Y$. Suppose every vertex in $X$ has degree at least $d$.
Then the followings are true:
\begin{itemize}
\item [(i)] If $|X|\leq d+1$ and $|Y|\leq 2d-1$, then $G$ has a path containing all vertices in $X$;
\item [(ii)] If $G$ is connected, $|X|\leq d$ and $|Y|\leq 3d-3$, then $G$ has a path containing all vertices in $X$;
\item [(iii)] Let $t\geq 1$ be any integer. If $|X|\leq d+t$ and $|Y|\leq 3d+2t-3$, then there exist at most $t+1$ disjoint paths in $G$ containing all vertices in $X$.
\end{itemize}
\end{lemma}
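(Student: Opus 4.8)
The plan is to prove all three parts by a single augmentation idea: attach to $G$ some new \emph{apex} vertices, each joined to every vertex of $X$, apply one of the cycle results above to the enlarged graph to obtain a cycle through all of $X$, and then delete the apex vertices to break this cycle into the desired paths. Throughout we may assume $|X|\ge 2$, since for $|X|\le 1$ a single vertex (or the empty path) already works. For (i), form $G^+$ by adding one apex $w$ joined to all of $X$, and set $d'=d+1$. Then every vertex of $X$ has degree at least $d'$ in $G^+$, while $|X|\le d+1=d'$ and $|Y\cup\{w\}|\le (2d-1)+1=2d'-2$. Hence Theorem~\ref{Thm:Jackson} yields a cycle $C$ through all of $X$ in $G^+$; deleting $w$ from $C$ (or taking $C$ itself if $w\notin V(C)$) gives a path through all of $X$ in $G$. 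No connectivity hypothesis is needed here, matching the statement of (i).

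For (ii) I would instead invoke the essentially-2-connected result, Lemma~\ref{Lem:Refine-KLZ}. Again add a single apex $w$; since $G$ is connected one checks that $G^+=G+w$ is essentially 2-connected: deleting $w$ returns the connected graph $G$, deleting any $y\in Y$ leaves $X\cup\{w\}$ (hence everything) connected, and the only vertices that could be cut-vertices are those $x\in X$ carrying a pendant neighbour in $Y$; but such pendant neighbours are exactly the degree-one vertices, so $G^+$ minus its degree-one vertices is 2-connected. With $d'=d+1$ we have $|X|\le d=d'-1$, $|Y\cup\{w\}|\le (3d-3)+1=3d'-5$, and minimum $X$-degree at least $d'$, so Lemma~\ref{Lem:Refine-KLZ} gives a cycle through $X$, which again yields the required path after deleting $w$.

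For (iii) the same scheme works, but now I add $t+1$ apex vertices $w_1,\dots,w_{t+1}$, each joined to all of $X$, and set $d^+=d+t+1$. Because there are at least two apex vertices, all of $X$ is cross-linked through them, so $G^+$ is connected even without assuming $G$ connected; moreover the only degree-one vertices of $G^+$ are the pendant vertices of $Y$, and after deleting them one verifies, using that two distinct apex vertices give two internally disjoint $x$--$x'$ connections, that $G^+$ is essentially 2-connected. The parameters fit exactly: $|X|\le d+t=d^+-1$ and $|Y|+(t+1)\le (3d+2t-3)+(t+1)=3d+3t-2=3d^+-5$, while every $X$-vertex has degree at least $d+(t+1)=d^+$. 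Lemma~\ref{Lem:Refine-KLZ} then produces a cycle $C$ through all of $X$ in $G^+$. Deleting the at most $t+1$ apex vertices lying on $C$ splits $C$ into at most $t+1$ arcs; since $G^+$ is bipartite with all apex vertices in the $Y$-side, no two apex vertices are consecutive on $C$, so each arc is a genuine path of $G$ with both endpoints in $X$, and together these arcs cover all of $X$, giving the desired at most $t+1$ disjoint paths.

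Setting aside the routine cycle-to-path conversions, the real points requiring care are the essential-2-connectivity verifications of the augmented graphs and the exact parameter bookkeeping, and it is here that the hypotheses are tight. Using $t+1$ rather than $t$ apex vertices is what turns $|X|\le d+t$ into the strict bound $|X|\le d^+-1$ required by Lemma~\ref{Lem:Refine-KLZ}, and it simultaneously makes the $Y$-bound $3d+2t-3$ translate into exactly $3d^+-5$; likewise the assumption $t\ge 1$ is precisely what guarantees at least two apex vertices, which is what lets us drop the connectivity hypothesis on $G$ in (iii). I expect the main obstacle to be organising these verifications so that all three parts share the single apex-augmentation argument while respecting the different connectivity assumptions and the off-by-one thresholds of Theorem~\ref{Thm:Jackson} versus Lemma~\ref{Lem:Refine-KLZ}.
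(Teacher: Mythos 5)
Your proposal is correct and follows essentially the same route as the paper: parts (i) and (ii) are the paper's arguments verbatim (one apex vertex plus Theorem~\ref{Thm:Jackson}, respectively plus Lemma~\ref{Lem:Refine-KLZ} with the essential-2-connectivity check), and your (iii) with $t+1$ apices and $d^{+}=d+t+1$ is exactly the paper's reduction --- it adds $t$ apices and invokes (ii) with parameter $d+t$ --- with (ii) inlined, so the bookkeeping coincides. One small repair in (iii): before augmenting you must delete the isolated vertices of $Y$ (as the paper does), since otherwise your claim that $G^{+}$ is connected is false and Lemma~\ref{Lem:Refine-KLZ} is inapplicable; this costs nothing, as isolated $Y$-vertices only decrease $|Y|$ and are irrelevant to paths through $X$.
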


\begin{proof}
(i). It is obvious when $|X|=1$. So assume $|X|\geq 2$.
Let $G'$ be the graph obtained from $G$ by adding a new vertex $y$ and joining $y$ to every vertex in $X$.
Then every vertex in $X$ has degree at least $d+1$ in $G'$.
Since $|X|\leq d+1$ and $|Y\cup\{y\}|\leq 2d=2(d+1)-2$.
By Theorem~\ref{Thm:Jackson}, $G'$ has a cycle $C$ containing all vertices in $X$.
The vertex $y$ may be contained in $C$ or not.
In either case, one can find a path in $G$ containing all vertices in $X$ (by considering $C-y$).
This proves (i).

(ii). Similarly we may assume $|X|\geq 2$.
Let $G'$ be obtained from $G$ by adding a new vertex $y$ and joining $y$ to every vertex in $X$.
Let $Y'=Y\cup \{y\}$. Then we have that $2\leq |X|\leq d$, $|Y'|\leq 3d-2$, and every vertex in $X$ has degree at least $d+1$ in $G'$.
We claim that $G'$ is essentially 2-connected.
To see this, consider a spanning tree $T$ in $G$ (note that $G$ is connected).
Let $G''$ be obtained from $T$ by adding the vertex $y$ and joining $y$ to every vertex in $X$.
Clearly we have $G''\subseteq G'$, and by definition, $G''$ is essentially 2-connected.
This implies that $G'$ is essentially 2-connected.
Now applying Lemma \ref{Lem:Refine-KLZ}, we can conclude that $G'$ has a cycle $C$ containing all vertices in $X$.
The vertex $y$ lies on $C$ or not.
In either case, $C-y$ (and thus $G$) contains a path containing all vertices in $X$. This proves (ii).

(iii). Let $G'$ be obtained from $G$ by deleting all isolated vertices in $Y$,
adding $t$ new vertices $y_1,...,y_t$ and then joining every $y_i$ for $i\in [t]$ to all vertices in $X$.
Let $Y_0$ be the set of all isolated vertices of $G$ in $Y$ and let $Y'=(Y\backslash Y_0)\cup\{y_1,...,y_t\}$.
So $G'$ is a connected bipartite graph with parts $X$ and $Y'$ such that $|X|\leq d+t$, $|Y'|\leq |Y|+t\leq 3(d+t)-3$,
and every vertex in $X$ has degree at least $d+t$ in $G'$.
By (ii), $G'$ has a path $P$ containing all vertices in $X$.
By deleting the vertices $y_1,...,y_t$,
we can obtain at most $t+1$ disjoint paths (in $G$) from $P$ containing all vertices of $X$.
This proves Lemma~\ref{Lem:Bipa-Path}.
\end{proof}

Lastly, we need the following useful lemma due to Erd\H{o}s, Faudree, Schelp
and Simonovits \cite{EFSS89}. An analog for cycles can be found in \cite{LRWZ12}.

\begin{lemma}[\rm {Erd\H{o}s et al., \cite[Lemma~1]{EFSS89}}]\label{Thm:EFSS-Path}
Let $G$ be a graph with at most $2d+1$ vertices and $\mathcal{P}$ be any
family of disjoint paths $P_i$, where both end-vertices of each
$P_i\in \mathcal{P}$ have degree at least $d$ in $G$.
Then $G$ contains a path $Q$ such that both its end-vertices have
degree at least $d$ in $G$ and $\bigcup_{P_i\in\mathcal{P}}V(P_i)\subseteq V(Q)$.
\end{lemma}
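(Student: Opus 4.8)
The plan is to argue by induction on $m:=|\mathcal{P}|$, reducing the whole statement to the single task of \emph{merging two vertex-disjoint paths whose end-vertices all have degree at least $d$}. For $m\leq 1$ the path $Q=P_1$ (or any one path) already works. For $m\geq 2$ I would first apply the induction hypothesis to the family $\{P_2,\dots,P_m\}$ to obtain a single path $Q'$ with $\bigcup_{i\geq 2}V(P_i)\subseteq V(Q')$ whose two end-vertices have degree at least $d$ in $G$; it then remains to merge the two disjoint paths $P_1$ and $Q'$, each of whose end-vertices has degree at least $d$, into one path $Q$ that contains all their vertices and again has both end-vertices of degree at least $d$. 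Thus the heart of the matter is this two-path merge, and the key structural fact driving it is that any two vertices $x,y$ of degree at least $d$ satisfy $d(x)+d(y)\geq 2d\geq n-1$ because $n\leq 2d+1$. In particular, if such $x,y$ are non-adjacent, then $N(x)$ and $N(y)$ are subsets of the $n-2\leq 2d-1$ vertices different from $x,y$ whose sizes sum to at least $2d$, so they meet: $x$ and $y$ have a common neighbour. This is the Ore-type condition for paths that powers every merging step.

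To merge two disjoint paths $A$ (with end-vertices $a_1,a_2$) and $B$ (with end-vertices $b_1,b_2$), I would try to join one end-vertex $a$ of $A$ to one end-vertex $b$ of $B$ while keeping the two \emph{untouched} end-vertices -- which have degree at least $d$ by hypothesis -- as the ends of the new path. Three outcomes arise. If $ab\in E(G)$, concatenating $A$ and $B$ along $ab$ gives the desired path at once, with ends the two free vertices. If $a$ and $b$ have a common neighbour $w$ outside $V(A)\cup V(B)$, then $A$--$w$--$B$ is the desired path; here we use the freedom, allowed by the statement, for $Q$ to contain vertices outside $\bigcup_i V(P_i)$. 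The remaining, genuinely delicate, case is when every common neighbour of the chosen ends lies on $A\cup B$ itself.

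In that last case I would perform a rotation. Writing $B=b_1v_1\cdots v_{q-1}b_2$ with $b_1=v_0,\ b_2=v_q$, the goal is to find a \emph{crossing}, namely an index $j$ with $a_2v_j\in E(G)$ and $b_1v_{j+1}\in E(G)$; one then re-routes into the single path $a_1 A a_2\, v_j v_{j-1}\cdots b_1\, v_{j+1}\cdots b_2$, which covers all of $A\cup B$ and still ends at the good vertices $a_1$ and $b_2$. The existence of such a crossing is a counting statement: the set of indices $j$ with $a_2v_j$ an edge and the set with $b_1v_{j+1}$ an edge both lie in $\{0,\dots,q-1\}$, and once the neighbourhoods of $a_2$ and $b_1$ are concentrated on the short paths their sizes are large enough (because $|V(A)|+|V(B)|\leq n\leq 2d+1$ while each of $a_2,b_1$ has at least $d$ neighbours) to force the two sets to intersect. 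A mirror-image crossing on $A$, or a symmetric choice among the other pairs of end-vertices, covers the situations where $a_2$'s neighbours happen to sit mostly on $A$ rather than on $B$.

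The main obstacle is precisely this rotation step: guaranteeing that absorbing an internally-placed common neighbour never forces a low-degree vertex to become an end-vertex of $Q$. The whole point of the crossing construction is that it keeps the originally-free end-vertices as the ends, so that their degree-$\geq d$ property is preserved automatically, and the bound $n\leq 2d+1$ is exactly what makes the relevant neighbourhoods dense enough for a crossing to exist. Iterating the trichotomy over the two choices of end-vertex on each path, together with the common-neighbour fact above, closes all subcases of the two-path merge and thereby completes the induction.
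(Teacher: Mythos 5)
There is no proof of this lemma in the paper itself---it is imported verbatim as \cite[Lemma~1]{EFSS89}---so your attempt has to stand on its own, and as written it has two genuine gaps. The first is in the inductive reduction. The path $Q'$ that the induction hypothesis returns for $\{P_2,\dots,P_m\}$ is only guaranteed to \emph{contain} $\bigcup_{i\geq 2}V(P_i)$; your own merging step shows it will in general pick up extra vertices (the absorbed common neighbours $w$), and nothing prevents those from lying on $P_1$. So $P_1$ and $Q'$ need not be disjoint, and the ``merge two disjoint paths'' step you reduce everything to does not apply. You also cannot repair this by strengthening the induction to ``$Q'$ avoids a prescribed vertex set'': the only vertices available to link $P_2$ to $P_3$ may well be internal vertices of $P_1$, so such a strengthened statement is false. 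Handling a path $Q'$ that threads through $P_1$ is a genuinely different (and harder) configuration than the disjoint merge.

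The second gap is the crossing step itself, which you correctly identify as the heart of the matter but then justify with an invalid count. In your last case, only the \emph{common} neighbours of $a_2$ and $b_1$ are confined to $V(A)\cup V(B)$; the individual neighbourhoods are not ``concentrated on the short paths'' and may largely lie outside, or on the wrong path. Indeed, with $n=2d+1$ the Ore-type bound guarantees merely $d(a_2)+d(b_1)-(n-2)\geq 1$ common neighbour, i.e.\ possibly a single vertex, say an internal vertex $v_j$ of $B$ adjacent to \emph{both} $a_2$ and $b_1$. That configuration is not a crossing (you need $a_2v_j$ together with $b_1v_{j+1}$), and the rotation through it strands either $v_0\cdots v_{j-1}$ or $v_{j+1}\cdots v_q$, so $B$ is not covered; meanwhile $a_2$ may have no neighbour on $B$ at all, so the sets of indices you want to intersect can both be nearly empty, and $|V(A)|+|V(B)|\leq 2d+1$ gives no lower bound on them. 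The ``mirror crossing on $A$'' and trying the other end-pairs suffer the same defect. A correct proof has to work harder here---e.g.\ exploit the degrees of \emph{all four} ends simultaneously, allow rotations that promote new high-degree vertices to end-vertices of $Q$ (your insistence on keeping the original four ends is an unproved, and possibly false, strengthening), or argue via cycles through $V(B)$ as in the source \cite{EFSS89}. As it stands, the trichotomy closes only the first two cases.
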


\section{Proof of Theorem~\ref{Thm:Main}}\label{Sec:Thm:Pr-Main}
For given positive integers $n>d\geq k$, let $\phi$ be the function
$\phi(n,d,k)$ defined in Theorem~\ref{Thm:Main} throughout this section.
To complete the proof of Theorem~\ref{Thm:Main}, in view of Lemma~\ref{Lem:Ext-graphs},
it suffices to prove that
\begin{equation}\label{equ:main}
\mbox{any $n$-vertex graph with at least $\phi$ vertices of degree at least $d$ contains a path $P_{k+1}$.}
\end{equation}

We will prove this by contradiction.
Consider any positive integers $d\geq k$ for which \eqref{equ:main} fails.
Then there exists a counterexample $G$ to the statement \eqref{equ:main} as follows:
\begin{itemize}
\item [(a).] $G$ is an $n$-vertex graph with at least $\phi=\phi(n,d,k)$ vertices of degree at least $d$,
\item [(b).] $G$ does not contain any path $P_{k+1}$ (on $k+1$ vertices),
\item [(c).] subject to (a) and (b), $n$ is minimum, and
\item [(d).] subject to (a), (b) and (c), $G$ has the minimum number of edges.
\end{itemize}

We proceed the proof by proving a sequence of claims.
In the rest of the proof, we say a vertex is a {\it high degree vertex} if it has
degree at least $d$ in $G$ and {\it low degree vertex} otherwise. A path is
a {\it high-end} path if both its end-vertices are high degree vertices.
Our first claim is the following.

\begin{claim}\label{Cl:HighEndPath}
$G$ has no high-end path on at least $k-1$ vertices.
\end{claim}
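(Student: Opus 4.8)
The plan is to argue by contradiction, exploiting the extremal choices (c) and (d) that define $G$. Suppose $G$ contains a high-end path $R$ on at least $k-1$ vertices, and take such an $R$ to be as long as possible; write $R=u_1u_2\cdots u_m$ with $m\geq k-1$ and both $u_1,u_m$ high degree vertices. Since $G$ has no $P_{k+1}$ we must have $m=k-1$ exactly (a high-end path on $k$ or $k+1$ vertices would already be, or immediately yield, a $P_{k+1}$), so I expect the real content to be in ruling out $m=k-1$.

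First I would examine the neighborhoods of the two endpoints $u_1$ and $u_m$ relative to $R$ and to $V(G)\setminus V(R)$. The standard rotation–extension technique applies: if $u_1$ (or $u_m$) had a neighbor $w\notin V(R)$, then $wR$ (resp. $Ru_m w$) would be a longer high-end path once we check $w$ is high degree, or at least a longer path; the point is to force all high-degree neighbors of the endpoints to lie on $R$. Then a Pósa-type rotation should let me rotate the endpoint $u_1$ to other vertices of $R$ while keeping the other end $u_m$ fixed, generating a family of high-end paths of the same length $k-1$ but with varying endpoints; combined with the fact that $G$ has no $P_{k+1}$, this pins down strong adjacency restrictions among the vertices of $R$ and between $R$ and the outside.

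The key leverage, I believe, comes from minimality: by (d), $G$ has the fewest edges among all counterexamples on $n$ vertices, so I can delete a carefully chosen edge of $G$ and derive that the resulting graph is no longer a counterexample, i.e., deleting that edge either destroys a high-degree vertex (dropping the count of high-degree vertices below $\phi$) or creates a $P_{k+1}$. Likewise, by (c), $n$ is minimum, so I should be able to delete a vertex — for instance a low degree vertex not needed for the path structure, or an isolated-type vertex — and reach an $(n-1)$-vertex counterexample, contradicting minimality, unless the deletion interferes with the high-degree count. The interplay between ``we have a high-end $P_{k-1}$'' and ``we have no $P_{k+1}$'' is exactly the gap of two vertices that Lemma~\ref{Thm:EFSS-Path} is designed to close: if I can produce even a single additional disjoint high-end path, or extend $R$ by absorbing two outside vertices into a high-end path on $k+1$ vertices, I win.

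The hard part will be handling the endpoints' neighbors that lie \emph{inside} $R$: a high-degree endpoint with many neighbors on a short path $R$ forces, via rotation, a rich cycle-like structure on $V(R)$, and I must convert this into either a longer high-end path (contradicting the maximality of $R$) or an outright $P_{k+1}$ (contradicting (b)). I expect the cleanest route is to show that the existence of a high-end $P_{k-1}$ together with the degree condition $d\geq k$ allows an endpoint to reach a vertex off $R$ through a rotation, yielding a high-end path on $k$ vertices and then, since that endpoint still has degree at least $d\geq k>k-1$, a further extension to $k+1$ vertices. Making this rotation argument airtight — in particular verifying that the vertex we rotate to is itself high degree so that the extended path remains high-end — is where the careful case analysis will concentrate.
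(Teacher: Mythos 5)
There is a genuine gap here: your proposal never actually closes the case $|R|=k-1$, and the machinery you reach for (P\'osa rotations, the minimality conditions (c) and (d), Lemma~\ref{Thm:EFSS-Path}) is all unnecessary and, worse, rests on a misreading of what must be contradicted. You repeatedly insist that any extension of $R$ must remain a \emph{high-end} path (``once we check $w$ is high degree'', ``verifying that the vertex we rotate to is itself high degree so that the extended path remains high-end''). But the contradiction you need is with condition (b): $G$ contains no $P_{k+1}$, i.e.\ no path on $k+1$ vertices \emph{of any kind}. The extension vertices may be low degree; only the endpoints of the original path $R$ need to be high, and they are high by hypothesis.

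Once this is seen, no rotation is needed to escape $R$. If $|R|=k-1$, the endpoint $u_1$ has degree $d(u_1)\geq d\geq k$ while $R$ contains only $k-2$ vertices other than $u_1$, so $u_1$ has at least two neighbors outside $V(R)$; pick one, say $u'$. Symmetrically $u_m$ has at least two neighbors outside $V(R)$, hence at least one, say $v'$, outside $V(R)\cup\{u'\}$. Then $u'u_1\cup R\cup u_mv'$ is a path on $k+1$ vertices, contradicting (b) directly. The case $|R|=k$ is the same one-sided count ($u_1$ has at most $k-1$ neighbors on $R$ but degree at least $k$). This is exactly the paper's proof: a short neighbor count with two-sided extension, using only $d\geq k$ --- no rotation, no appeal to edge or vertex minimality, and no use of Lemma~\ref{Thm:EFSS-Path}. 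Your final paragraph gestures at the right move (``a further extension to $k+1$ vertices'') but routes it through a rotation to first obtain a high-end $P_k$, which both adds a step that cannot be carried out as stated (you would need the rotated-to endpoint to be high, which nothing guarantees) and imposes a high-degree requirement on the new endpoints that the claim does not need. As written, the proposal is a plan with a conceptual misstep rather than a proof.
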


\begin{proof}
Suppose that $P$ is a high-end path on at least $k-1$ vertices.
Let $u,v$ be end-vertices of $P$. If $|P|\geq k+1$, then there is nothing to prove.
If $|P|=k$, as $d(u)\geq d\geq k$, $u$ has a neighbor $u'$ outside $V(P)$, and $P\cup uu'$ is a desired path $P_{k+1}$.
So $|P|=k-1$. Then $u$ has a neighbor $u'$ outside $V(P)$, and $v$ has a neighbor $v'$ outside $V(P)\cup\{u'\}$.
Now $u'u\cup P\cup vv'$ is a path $P_{k+1}$. In any case, we get a contradiction.
\end{proof}

\begin{claim}\label{Cl:Smallk}
We may assume that $k\geq 5$.
\end{claim}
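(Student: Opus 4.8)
The plan is to verify the target statement \eqref{equ:main} directly for each $k\in\{1,2,3,4\}$, which forces the minimal counterexample $G$ to satisfy $k\geq 5$ and thereby establishes the claim. The two smallest cases are immediate. For $k=1$ we have $\phi=1$, and any high degree vertex (of degree $\geq d\geq 1$) is incident to an edge, giving $P_2$; for $k=2$ we again have $\phi=1$, and a vertex of degree $\geq d\geq 2$ has two neighbours, giving $P_3$. In both cases the single high degree vertex guaranteed by (a) already produces $P_{k+1}$, contradicting (b).

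For $k=3$ I would invoke Claim~\ref{Cl:HighEndPath}, which (with $k-1=2$) forbids any high-end path on at least two vertices. Since within a connected component any two high degree vertices are joined by a path on at least two vertices, each component of $G$ contains at most one high degree vertex, and a component carrying one has at least $d+1$ vertices. Hence if $t$ is the number of high degree vertices then $n\geq t(d+1)$; with $t\geq\phi=q+1$ and $n=q(d+1)+r$, $0\leq r\leq d$, this gives $n\geq(q+1)(d+1)>q(d+1)+r=n$, a contradiction.

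The case $k=4$ is the real obstacle and needs a finer count. Here Claim~\ref{Cl:HighEndPath} (with $k-1=3$) forbids high-end paths on at least three vertices, and I would extract from this three structural facts: any two high degree vertices in one component must be adjacent (otherwise a shortest path between them is a high-end $P_3$ or longer), a component cannot contain three high degree vertices (the induced triangle would yield a high-end $P_3$), and two adjacent high degree vertices cannot share a neighbour (again a high-end $P_3$). Thus every component has at most two high degree vertices; a component with exactly one has at least $d+1$ vertices, while one with exactly two has at least $2d$ vertices, namely the two adjacent centres together with at least $d-1$ private neighbours each. Writing $a$ and $b$ for the numbers of components carrying exactly one and exactly two high degree vertices, so that $t=a+2b$, we obtain $n\geq a(d+1)+2bd=td+a$.

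It remains to combine this inequality with the two regimes of $n=2qd+r$. When $d<r<2d$ and $t\geq\phi=2q+2$, the bound forces $r\geq 2d$, contradicting $r<2d$; when $0\leq r\leq d$ and $t\geq\phi=2q+1$, it forces $r\geq d$ together with $a=0$, whence $t=2q+1=2b$ contradicts parity. The delicate points, and the reason $k=4$ must be separated out, are precisely this boundary behaviour at $r=d$ and the need to track the parity of $t$ through the two regimes; I expect this bookkeeping, rather than any structural difficulty, to be the main hurdle in the case $k=4$.
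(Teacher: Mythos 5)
Your proof is correct and takes essentially the same route as the paper: after the trivial cases $k\le 2$, both arguments use Claim~\ref{Cl:HighEndPath} to bound the number of high degree vertices per component (at most one for $k=3$; at most two, adjacent with disjoint neighbourhoods, for $k=4$), derive the component size bounds $d+1$ and $2d$, and settle the delicate boundary case $r=d$ by the same parity observation that the paper records in its parenthetical remark that equality forces every component to be a double-star $H^*_{d,4}$. The only differences are cosmetic bookkeeping (your inequality $n\ge td+a$ versus the paper's per-component bound of $|V(H)|/d$ high degree vertices with an equality analysis).
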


\begin{proof}
We first point out that the cases $k\in \{1,2\}$ are trivial:
a graph $G$ contains $P_2$ if and only if $G$ has a vertex of degree at least 1,
and $G$ contains $P_3$ if and only if $G$ has a vertex of degree at least 2.

Now consider the case $k=3$. So $G$ contains no $P_4$.
Then each component $H$ of $G$ is a $K_1,K_2,K_3$ or a star $K_{1,s}$ for some $s\geq 2$.
Note that only the component $K_{1,s}$ with $s\geq d$ can have one high degree vertex.
It follows that $H$ has at most $\lfloor\frac{|V(H)|}{d+1}\rfloor$ high degree vertices. So $G$ has at most
$$\sum_{\mbox{ each component } H} \left\lfloor\frac{|V(H)|}{d+1}\right\rfloor \leq\left\lfloor\frac{n}{d+1}\right\rfloor=\phi-1$$ high
degree vertices, a contradiction.

Finally consider the case $k=4$. In this case, $G$ contains no $P_5$.
Let $H$ be any component of $G$.
By Claim~\ref{Cl:HighEndPath}, $H$ has no high-end path on 3 vertices.
This shows that $H$ has at most two high degree vertices, and if $u,v$
are the two high degree vertices of $H$, then $uv$ must be a cut-edge
of $H$ and thus $N(u)\cap N(v)=\emptyset$. It also follows that if
$H$ has only one high degree vertex, then $|V(H)|\geq d+1$ and if
$H$ has two high degree vertices, then $|V(H)|\geq 2d$. So we can conclude
that $H$ has at most $\frac{|V(H)|}{d}$ high degree vertices (and if $H$ has
exactly $\frac{|V(H)|}{d}$ high degree vertices, then $|V(H)|=2d$).
Therefore $G$ has at most
$$\sum_{\mbox{ each component } H} \frac{|V(H)|}{d}\leq\frac{n}{d}$$ high
degree vertices. (We also see that in this case, if $G$ has exactly
$\frac{n}{d}$ high degree vertices, then every component of $G$ forms
a double-star $H^*_{d,4}$.) This gives that the number of high degree
vertices of $G$ is at most $\phi-1$, a contradiction.
\end{proof}

We now discuss several useful properties that the graph $G$ has.

\begin{claim}\label{Cl:NeighborLower}
If $u$ is a low degree vertex in $G$, then every neighbor of $u$ has degree exactly $d$ and thus is high.
\end{claim}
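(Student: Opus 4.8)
The plan is to exploit the edge-minimality condition (d) on the counterexample $G$. The key observation is that deleting a single edge incident to the low degree vertex $u$ changes the degrees of only its two endpoints, so it perturbs the set of high degree vertices in a very controlled way. I would fix an arbitrary neighbor $v$ of $u$ and study the effect of passing to $G':=G-uv$, aiming to show that unless $d(v)=d$ exactly, the graph $G'$ still satisfies conditions (a)--(c) while having strictly fewer edges, contradicting (d).

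First I would record the elementary facts about $G'$: deleting $uv$ decreases only $d(u)$ and $d(v)$, each by one, and leaves every other degree untouched; in particular $G'$ is a subgraph of $G$ and hence still contains no $P_{k+1}$, and it has the same number $n$ of vertices. Since $u$ is low, $d_{G'}(u)\leq d_G(u)-1<d$, so $u$ remains a low degree vertex in $G'$ and, as before, is never counted among the high degree vertices.

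Next I would split into the two cases that I want to exclude. If $d(v)\leq d-1$, then $v$ is also low in $G$ and stays low in $G'$, so the set of high degree vertices is literally unchanged; thus $G'$ has at least $\phi$ high degree vertices and satisfies (a)--(c) with fewer edges, contradicting (d). If instead $d(v)\geq d+1$, then $d_{G'}(v)\geq d$, so $v$ remains high and once again the set of high degree vertices is unchanged, yielding the same contradiction with (d). Therefore neither $d(v)<d$ nor $d(v)>d$ is possible, which forces $d(v)=d$; in particular $v$ is a high degree vertex. As $v$ was an arbitrary neighbor of $u$, this proves the claim.

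I do not expect a real obstacle here: the only point requiring care is verifying that condition (a) survives the edge deletion, i.e.\ that no high degree vertex is lost. This is guaranteed in both cases precisely because the endpoint $v$ either never belonged to the high set (when $d(v)<d$) or has enough degree to remain in it after a single deletion (when $d(v)>d$), so it is exactly the borderline value $d(v)=d$ that cannot be removed without breaking the count.
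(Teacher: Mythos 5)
Your proposal is correct and follows exactly the paper's own argument: delete the edge $uv$ and observe that whether $d(v)\leq d-1$ (so $v$ is and stays low) or $d(v)\geq d+1$ (so $v$ stays high after losing one edge), the set of high degree vertices is unchanged, so $G'=G-uv$ satisfies (a)--(c) with fewer edges, contradicting the edge-minimality condition (d). Your write-up merely spells out the case analysis that the paper leaves implicit; there is no substantive difference.
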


\begin{proof}
Suppose, otherwise, that there is a vertex $v\in N(u)$ with $d(v)\leq d-1$ or $d(v)\geq d+1$.
Then $G'=G-uv$ is a graph satisfying $(a), (b)$ and $(c)$, but having less edges than $G$.
This violates $(d)$ and the choice of $G$.
\end{proof}

\begin{claim}\label{Cl:Connected}
$G$ is connected.
\end{claim}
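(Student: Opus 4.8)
The plan is to argue by contradiction using the minimality of $n$ recorded in condition (c). Suppose $G$ is disconnected, with components $G_1,\dots,G_m$ where $m\geq 2$; write $n_i=|V(G_i)|$ and let $h_i$ be the number of high degree vertices contained in $G_i$. Since each $G_i$ is a full connected component, a vertex has degree at least $d$ in $G$ if and only if it has degree at least $d$ within its own component; in particular any component with $n_i\leq d$ contains no high degree vertex, so $h_i=0$ there. As $G$ has at least $\phi\geq 1$ high degree vertices by (a), at least one component satisfies $n_i>d$.

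The key reduction is to apply minimality to each large component. Fix $i$ with $n_i>d\geq k$. Then $G_i$ is a $P_{k+1}$-free graph (inheriting (b) as a subgraph of $G$) on $n_i<n$ vertices. Were $h_i\geq \phi(n_i,d,k)$, the graph $G_i$ would itself satisfy (a) and (b) with the same parameters $d,k$ but on strictly fewer vertices, contradicting the minimality of $n$ in (c). Hence $h_i\leq \phi(n_i,d,k)-1$ for every large component.

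It remains to show these local bounds sum to strictly less than $\phi$, contradicting (a). I would set $f(n)=\phi(n,d,k)-1$ for $n>d$ and $f(n)=0$ for $1\leq n\leq d$, so that the previous paragraph gives $h_i\leq f(n_i)$ for all $i$. The crux is then to establish that $f$ is monotone nondecreasing and superadditive, i.e. $f(a)+f(b)\leq f(a+b)$; iterating superadditivity yields $\sum_i h_i\leq \sum_i f(n_i)\leq f\!\left(\sum_i n_i\right)=f(n)=\phi-1<\phi$, the desired contradiction with (a). For odd $k$ one has $f(n)=\frac{k-1}{2}q$ where $n=q(d+1)+r$ with $0\leq r\leq d$, and superadditivity reduces to the elementary observation that combining two remainders can only increase the total quotient (writing $n=\sum n_i=\big(\sum q_i\big)(d+1)+\sum r_i$ with $0\leq r\leq d<d+1$ forces $\sum q_i\leq q$).

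The main obstacle, and the place where Claim~\ref{Cl:Smallk} (that we may take $k\geq 5$) is genuinely used, is verifying superadditivity for even $k\geq 6$. Here $f(n)=\frac{k-2}{2}q+\epsilon(r)$, where $\epsilon(r)=0$ if $r\leq d-\frac{k}{2}$ and $\epsilon(r)=1$ otherwise, so the troublesome extra indicator $\epsilon$ must be controlled. Writing $a=q_a(d+1)+r_a$ and $b=q_b(d+1)+r_b$, the verification splits on whether $r_a+r_b\leq d$. When $r_a+r_b\leq d$ one checks $\epsilon(r_a)+\epsilon(r_b)\leq \epsilon(r_a+r_b)$, the only delicate case being $\epsilon(r_a)=\epsilon(r_b)=1$, which would force $r_a+r_b>2d-k\geq d$ (using $d\geq k$), contradicting $r_a+r_b\leq d$. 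When $r_a+r_b>d$, the resulting carry raises the quotient by one and contributes $\frac{k-2}{2}\geq 2$ to $f(a+b)$, dominating the at-most-$2$ loss from $\epsilon(r_a)+\epsilon(r_b)$; the mixed case $a\leq d<b$ is exactly monotonicity of $f$. The inequality $\frac{k-2}{2}\geq 2$ is what makes this argument succeed, so the small even values excluded earlier are indeed necessary for the counting to close.
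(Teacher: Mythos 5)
Your proof is correct and follows essentially the same route as the paper: both reduce to the minimality of $n$ in condition (c) to bound the number of high degree vertices in each component by $\phi(n_i,d,k)-1$, and both close the count with the same arithmetic on quotients and remainders modulo $d+1$, including the two key inequalities $r_1+r_2>2d-k\geq d$ (from $d\geq k$) and $\frac{k-2}{2}\geq 2$ (from $k\geq 6$). Your packaging of the case analysis as monotonicity plus superadditivity of $f(n)=\phi(n,d,k)-1$ (extended by $0$ for $n\leq d$) is a clean reformulation of the paper's three-case computation for $G_1+G_2$, with the minor bonus of handling many components and small components explicitly.
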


\begin{proof}
Suppose that $G$ is not connected. Then $G$ is a disjoint union of two subgraphs $G_1$ and $G_2$.
By our choice of $G$, each of $G_1$ and $G_2$ does not violate the statement \eqref{equ:main}.
Note that each $G_i$ contains no $P_{k+1}$.
For $i\in \{1,2\}$, let $|V(G_i)|:=n_i=q_i(d+1)+r_i$ for $0\leq r_i\leq d$, and let $n=q(d+1)+r$ for $0\leq r\leq d$.
As $n=n_1+n_2$, we have that either (1) $q=q_1+q_2$ and $r=r_1+r_2\leq d$, or (2) $q=q_1+q_2+1$ and $0\leq r=r_1+r_2-(d+1)\leq d$.

First consider that $k$ is odd, or $k\geq 6$ is even and $r_i\leq d-\frac{k}{2}$ for both $i\in \{1,2\}$.
In this case, since $G_i$ has no $P_{k+1}$ and does not violate the statement \eqref{equ:main},
each $G_i$ has at most $q_i\lfloor\frac{k-1}{2}\rfloor$ high degree vertices.
So $G$ has at most $(q_1+q_2)\lfloor\frac{k-1}{2}\rfloor\leq q\lfloor\frac{k-1}{2}\rfloor\leq \phi-1$
high degree vertices, a contradiction.

Now assume that $k\geq 6$ is even and exactly one of $r_1$ and $r_2$ is at most $d-\frac{k}{2}$.
Without loss of generality, assume that $r_1\leq d-\frac{k}{2}$ and $r_2>d-\frac{k}{2}$.
Then $G_1$ has at most $q_1\frac{k-2}{2}$ high degree vertices and $G_2$ has at most
$q_2\frac{k-2}{2}+1$ high degree vertices. Thus $G$ has at most $(q_1+q_2)\frac{k-2}{2}+1$
high degree vertices. Note that either $q=q_1+q_2+1$, or $q=q_1+q_2$ and $d\geq r=r_1+r_2>d-\frac{k}{2}$.
In both cases, we see that the number of high degree vertices in $G$ is at most $(q_1+q_2)\frac{k-2}{2}+1\leq \phi-1$,
a contradiction.

Finally, $k\geq 6$ is even and $r_i>d-\frac{k}{2}$ for both $i\in \{1,2\}$.
In this case, each $G_i$ has at most $q_i\frac{k-2}{2}+1$ high degree vertices,
and so $G$ has at most $(q_1+q_2)\frac{k-2}{2}+2$ high degree vertices.
Since $r_1+r_2>2d-k\geq d$, we must have that $q=q_1+q_2+1$.
Thus the number of high degree vertices in $G$ is at most
$(q_1+q_2)\frac{k-2}{2}+2\leq q\frac{k-2}{2}\leq \phi-1$, where the first inequality holds because $k\geq 6$.
This contradiction completes the proof of Claim~\ref{Cl:Connected}.
\end{proof}

\begin{claim}\label{Cl:ExactlyPhiHigh}
$G$ has exactly $\phi$ high degree vertices.
\end{claim}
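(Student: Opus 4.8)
The plan is to argue by contradiction: assume $G$ carries at least $\phi+1$ high degree vertices, and then produce either a sparser counterexample on the same number of vertices or a long path, contradicting the edge-minimality in (d) or the absence of $P_{k+1}$ in (b). The first thing I would record is that $G$ must contain a low degree vertex. Indeed, if every vertex had degree at least $d\ge k$, then a standard longest-path argument---an endpoint of a longest path sees all of its neighbours on that path, hence the path has at least $d+1\ge k+1$ vertices---would already yield a $P_{k+1}$, contradicting (b). So I may fix a low degree vertex $u$.

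The core step is a single edge deletion. Suppose first that $u$ has a neighbour $v$. By Claim~\ref{Cl:NeighborLower}, every neighbour of a low degree vertex has degree exactly $d$, so $d(v)=d$. Deleting the edge $uv$ changes only the degrees of $u$ and $v$: the vertex $u$ remains low, while $v$ drops to degree $d-1$ and becomes low. Hence \emph{exactly one} high degree vertex is lost, so $G-uv$ still has at least $\phi$ high degree vertices, the same number $n$ of vertices, and (being a subgraph of $G$) contains no $P_{k+1}$. Thus $G-uv$ satisfies (a), (b) and (c) but has strictly fewer edges than $G$, contradicting (d). What remains is the degenerate situation in which no low degree vertex has a neighbour, i.e.\ every low degree vertex is isolated. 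In that case no edge of $G$ can join a high and a low vertex, so all edges lie inside the set $S$ of high degree vertices; consequently the subgraph induced by $S$ has minimum degree at least $d\ge k$, and the same longest-path argument forces a $P_{k+1}$, again contradicting (b). (Here $S\neq\emptyset$ because $\phi\ge 1$ in every case of Theorem~\ref{Thm:Main}.) Combining the two cases establishes the claim.

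I expect the delicate point to be ensuring that the reduction is \emph{cheap}, namely that removing one edge costs only one high degree vertex. This is precisely where Claim~\ref{Cl:NeighborLower} is indispensable: it pins $d(v)=d$, so that $v$ lands just below the threshold after deletion and no over-counting occurs; it also guarantees that no third vertex's degree is disturbed by the move. The other thing to watch is routing each branch to the correct minimality condition---the edge-deletion branch to (d) and the ``all isolated'' branch to (b)---and verifying that the monotonicity one implicitly relies on (that passing to a subgraph on the same vertex set keeps the target threshold $\phi(n,d,k)$ unchanged) is automatic here, since $n$ does not change under edge deletion.
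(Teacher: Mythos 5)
Your proof is correct and follows essentially the same route as the paper: the heart of both arguments is deleting a single edge $uv$ between a low degree vertex $u$ and a neighbour $v$, where Claim~\ref{Cl:NeighborLower} pins $d(v)=d$ so that exactly one high degree vertex is lost, yielding a graph that still satisfies (a), (b), (c) but contradicts the edge-minimality (d). The only cosmetic differences are that you replace the paper's appeal to the Erd\H{o}s--Gallai theorem (for the all-high case) with an elementary longest-path argument, and you replace its use of Claim~\ref{Cl:Connected} (which guarantees $u$ has a neighbour) with a direct case analysis disposing of isolated low degree vertices --- both substitutions are valid.
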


\begin{proof}
Suppose that $G$ has at least $\phi+1$ high degree vertices.
If every vertex is high, then $\delta(G)\geq d\geq k$ and by Erd\H{o}s-Gallai Theorem \cite{EG59},
there is a path of length $d\geq k$ in $G$, a contradiction.
So there is some low degree vertex, say $u$ in $G$.
By Claim \ref{Cl:Connected}, $G$ is connected, so there exists some $v\in N(u)$.
By Claim \ref{Cl:NeighborLower}, $v$ is high.
Then $G'=G-uv$ is a graph satisfying $(a), (b)$ and $(c)$, but
with less number of edges, contradicting the choice of $G$.
This proves the claim.
\end{proof}

In the rest of the proof, we write $n=q(d+1)+r$ for some integers $q\geq 1$ and $0\leq r\leq d$.
Since $k\geq 5$, we have $\phi=\lfloor \frac{k-1}{2}\rfloor q+1+\epsilon$,
where $\epsilon=1$ if $k$ is even and $r>d-\frac{k}{2}$, and $\epsilon =0$ otherwise.

\begin{claim}\label{Cl:n>2d+1}
We have $n\geq 2d+2$ and thus $q\geq 2$.
\end{claim}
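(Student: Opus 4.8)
The plan is to argue by contradiction: assume $n\le 2d+1$. Since $n=q(d+1)+r$ with $0\le r\le d$, this forces $q=1$, and hence $\phi=\lfloor\frac{k-1}{2}\rfloor+1+\epsilon$. Let $X$ be the set of high degree vertices; by Claim~\ref{Cl:ExactlyPhiHigh} we have $|X|=\phi$, and set $Y=V(G)\setminus X$. By Claim~\ref{Cl:NeighborLower} every neighbour of a low degree vertex is high, so $Y$ is independent and every edge of $G$ lies inside $X$ or between $X$ and $Y$. I would then work in the bipartite graph $B=G(X,Y)$. Since each $x\in X$ has at most $|X|-1=\phi-1$ neighbours in $X$, every $x\in X$ has degree at least $d^{\ast}:=d-\phi+1$ in $B$. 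The key numerical feature is that $\phi$ is small: as $k\ge 5$ we have $3\le\phi\le\frac{k}{2}+1$ and $2\phi\le k+2\le d+2$, so $|X|=\phi$ is at most $d^{\ast}+1$, while the bound $n\le 2d+1$ is exactly the hypothesis needed to invoke Lemma~\ref{Thm:EFSS-Path}.

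The reduction I would use is as follows. Suppose $X$ can be covered by $j$ pairwise disjoint paths of $B$, each having both end-vertices in $X$. Deleting any $Y$-endpoints, these become strictly alternating $X$--$Y$ paths that together use exactly $\phi-j$ vertices of $Y$ (no two $X$-vertices are adjacent in $B$), so they span $2\phi-j$ vertices in total; their end-vertices are high degree, so Lemma~\ref{Thm:EFSS-Path} merges them into a single high-end path $Q$ with $|Q|\ge 2\phi-j$. If $|Q|\ge k-1$, this contradicts Claim~\ref{Cl:HighEndPath}. Thus it suffices to cover $X$ by $j$ such paths with $2\phi-j\ge k-1$, i.e. $j\le 2\phi-k+1$. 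A direct computation gives the threshold $j_{\max}=2$ when $k$ is odd, and $j_{\max}=2\epsilon+1$ when $k$ is even (so $j_{\max}=1$ if $\epsilon=0$ and $j_{\max}=3$ if $\epsilon=1$).

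To realise these covers I would apply Lemma~\ref{Lem:Bipa-Path} to $B$ with $d^{\ast}$ playing the role of $d$. When $k$ is odd, Lemma~\ref{Lem:Bipa-Path}(iii) with $t=1$ yields at most $2$ disjoint paths covering $X$, provided $|X|\le d^{\ast}+1$ and $|Y|\le 3d^{\ast}-1$; both hold, the latter being met with equality when $d=k$ and $n=2d+1$. When $k$ is even with $\epsilon=1$, Lemma~\ref{Lem:Bipa-Path}(iii) with $t=2$ gives at most $3$ paths under $|X|\le d^{\ast}+2$ and $|Y|\le 3d^{\ast}+1$, again verifiable from $n\le 2d+1$. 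The case $k$ even with $\epsilon=0$ is the delicate one, since $j_{\max}=1$ demands a single path through all of $X$. Here $\epsilon=0$ forces $n\le 2d+1-\tfrac{k}{2}$, hence $|Y|\le 2d+1-k<2d^{\ast}$; since each vertex of $X$ keeps all $\ge d^{\ast}$ of its $Y$-neighbours inside its own component of $B$, two components meeting $X$ would require at least $2d^{\ast}>|Y|$ vertices of $Y$, which is impossible. Therefore $X$ lies in a single component of $B$, and Lemma~\ref{Lem:Bipa-Path}(ii) (which rests on Lemma~\ref{Lem:Refine-KLZ}, and ultimately on Theorem~\ref{Thm:KLZ}) produces the required single path, as $|X|=\tfrac{k}{2}\le d^{\ast}$ and $|Y|\le 3d^{\ast}-3$.

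In every case the merged path $Q$ then has at least $k-1$ vertices and is high-end, contradicting Claim~\ref{Cl:HighEndPath}; this rules out $n\le 2d+1$ and establishes Claim~\ref{Cl:n>2d+1}. The hard part is the tightness of the size inequalities: because $|Y|$ can be as large as roughly $2d$, far beyond the ceiling $2d^{\ast}-2$ available from Theorem~\ref{Thm:Jackson}, only the sharper window $3d^{\ast}-3$ coming from Theorem~\ref{Thm:KLZ} keeps the number of covering paths down to the exact threshold $2\phi-k+1$; losing even one path would break the argument precisely in the even-$k$, $\epsilon=0$ regime, which is also where one must separately argue that $B$ is connected on $X$. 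I would additionally check the degenerate possibilities (e.g. $|X|\le 2$, although in fact $\phi\ge 3$ here) and verify the boundary values of $r$ governing $\epsilon$ against the exact formula for $\phi$, exactly where the inequalities above hold with equality.
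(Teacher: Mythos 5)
Your proposal is correct and follows essentially the same route as the paper: the same bipartite split $G(X,Y)$ with $|X|=\phi$ (via Claim~\ref{Cl:ExactlyPhiHigh}), the same per-case degree bounds $d^{\ast}$ (the paper's $d_1,d_2,d_3$), Lemma~\ref{Lem:Bipa-Path}(iii) with $t=1$ resp.\ $t=2$ in the odd and even-$\epsilon=1$ cases, and the merge via Lemma~\ref{Thm:EFSS-Path} into a high-end path on $\geq k-1$ vertices contradicting Claim~\ref{Cl:HighEndPath}. The only divergence is the even-$k$, $\epsilon=0$ subcase, where you first argue $X$ lies in a single component of the bipartite graph and then invoke Lemma~\ref{Lem:Bipa-Path}(ii) (resting on Theorem~\ref{Thm:KLZ}), whereas the paper applies Lemma~\ref{Lem:Bipa-Path}(i) directly, since $|Y|\leq 2d_2-1$ already fits Jackson's window and needs no connectivity argument; both are valid, the paper's being marginally simpler.
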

\begin{proof}
Suppose that $d<n\leq 2d+1$. So $q=\lfloor\frac{n}{d+1}\rfloor=1$ and $n=d+1+r$.
We will reach a contradiction to Claim~\ref{Cl:HighEndPath} by finding a
high-end path in $G$ on at least $k-1$ vertices.
Let $X$ be the set of all high degree vertices in $G$ and $Y=V(G)\backslash X$.
Let $G'=G(X,Y)$ be the spanning bipartite subgraph of $G$ with parts $X$ and $Y$.
We have $|X|=\phi$ by Claim~\ref{Cl:ExactlyPhiHigh}.

Suppose that $k$ is odd. Then $|X|=\phi=\frac{k+1}{2}$, and $|Y|=n-|X|\leq 2d+1-\frac{k+1}{2}$.
For every $x\in X$, $d_{G'}(x)\geq d_G(x)-(|X|-1)\geq d-\frac{k-1}{2}:=d_1$.
Since $d\geq k$, we can derive that $|X|=\frac{k+1}{2}\leq d_1$ and $|Y|\leq 2d+1-\frac{k+1}{2}\leq 3d_1-1$.
By Lemma~\ref{Lem:Bipa-Path} (iii) with $t=1$, $G'$ has at most two disjoint paths
(say $P_1, P_2$) containing all vertices in $X$. We may assume that all
end-vertices of $P_1, P_2$ are in $X$, so $P_1, P_2$ are high-end paths of $G$.
As $n\leq 2d+1$, by Lemma~\ref{Thm:EFSS-Path}, there is a high-end path $P$
in $G$ satisfying $V(P_1)\cup V(P_2)\subseteq V(P)$ and thus $|P|\geq |P_1|+|P_2|=2|X|-2=k-1$.

Now suppose that $k$ is even and $r\leq d-\frac{k}{2}$.
Then $|X|=\phi=\frac{k}{2}$ and $|Y|=n-|X|=(d+1+r)-|X|\leq d+1+(d-\frac{k}{2})-\frac{k}{2}=2d+1-k$.
Also for every $x\in X$, $d_{G'}(x)\geq d-\frac{k}{2}+1:=d_2$.
Since $d\geq k$, we see $|X|\leq d_2$ and $|Y|\leq 2d_2-1$.
By Lemma~\ref{Lem:Bipa-Path} (i), $G'$ has a path $P$ containing all vertices in $X$.
We may view $P$ as a path with both end-vertices in $X$.
So $P$ is a high-end path of $G$ with $|P|=2|X|-1=k-1$.

It remains to consider the case when $k$ is even and $d\geq r>d-\frac{k}{2}$.
In this case, $|X|=\phi=\frac{k}{2}+1$, $|Y|=n-|X|=(d+1+r)-|X|\leq 2d-\frac{k}{2}$,
and for every $x\in X$, $d_{G'}(x)\geq d-\frac{k}{2}:=d_3$.
Since $d\geq k$, one can deduce that $|X|\leq d_3+1$ and $|Y|\leq 3d_3$.
By Lemma~\ref{Lem:Bipa-Path} (iii) with $t=2$,
$G'$ has at most three disjoint paths $P_1,P_2,P_3$ containing all vertices in $X$,
all of which can be viewed as high-end paths of $G$.
Using Lemma~\ref{Thm:EFSS-Path},
$G$ has a high-end path $P$ containing all vertices in $P_1\cup P_2\cup P_3$.
Therefore $|P|\geq |P_1|+|P_2|+|P_3|=2|X|-3=2\left(\frac{k}{2}+1\right)-3=k-1$.

In any case, we get a contradiction to Claim~\ref{Cl:HighEndPath}.
This finishes the proof of Claim~\ref{Cl:n>2d+1}.
\end{proof}

\begin{claim}\label{Cl:NSHighVertices}
If $T$ is any set of at least $d+1$ vertices, then $T\cup N(T)$ contains
at least $\lfloor\frac{k+1}{2}\rfloor$ high degree vertices.
\end{claim}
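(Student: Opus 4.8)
The plan is to argue by contradiction using the minimality of $n$ (condition (c)). Suppose $T\cup N(T)$ contains at most $\lfloor\frac{k+1}{2}\rfloor-1=\lfloor\frac{k-1}{2}\rfloor$ high degree vertices. First I would reduce to the case $|T|=d+1$: for any $T_0\subseteq T$ with $|T_0|=d+1$ we have $T_0\cup N(T_0)\subseteq T\cup N(T)$, so it suffices to produce the required high degree vertices inside the smaller closed neighborhood; hence we may assume $|T|=d+1$. Write $X$ for the set of all high degree vertices, so that $|X|=\phi$ by Claim~\ref{Cl:ExactlyPhiHigh}, and set $X_0=X\cap(T\cup N(T))$, so that $|X_0|\le\lfloor\frac{k-1}{2}\rfloor$ by assumption.

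The key step is to delete $T$ and track which high vertices survive. Consider $G'=G-T$, an $n'$-vertex graph with $n'=n-(d+1)$. Every vertex $v\in X\setminus(T\cup N(T))$ has no neighbor in $T$ (by the definition of $N(T)$), so its degree is unchanged in $G'$ and it remains a high degree vertex of $G'$. There are $|X|-|X_0|=\phi-|X_0|$ such vertices, so $G'$ has at least $\phi-|X_0|$ vertices of degree at least $d$. Moreover $G'$ inherits from $G$ the property of containing no $P_{k+1}$, and since $n\ge 2d+2$ with $q\ge2$ by Claim~\ref{Cl:n>2d+1}, we have $n'=(q-1)(d+1)+r>d$, so $\phi(n',d,k)$ is defined and \eqref{equ:main} applies to the triple $(n',d,k)$.

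It remains to compare the two thresholds, which is where the arithmetic must come out exactly right. Writing $n=q(d+1)+r$ with $0\le r\le d$, deleting exactly $d+1$ vertices preserves the remainder and drops the quotient by one, giving $n'=(q-1)(d+1)+r$; since the condition defining $\epsilon$ depends only on $k$ and $r$, the parameter $\epsilon$ is unchanged, and therefore $\phi(n',d,k)=\lfloor\frac{k-1}{2}\rfloor(q-1)+1+\epsilon=\phi-\lfloor\frac{k-1}{2}\rfloor$. Combining this with $|X_0|\le\lfloor\frac{k-1}{2}\rfloor$ yields that $G'$ has at least $\phi-|X_0|\ge\phi-\lfloor\frac{k-1}{2}\rfloor=\phi(n',d,k)$ high degree vertices. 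Thus $G'$ is an $n'$-vertex graph with at least $\phi(n',d,k)$ high degree vertices and no $P_{k+1}$, that is, a counterexample to \eqref{equ:main} with $n'<n$, contradicting the minimality of $n$. I expect the only delicate point to be verifying the identity $\phi(n',d,k)=\phi-\lfloor\frac{k-1}{2}\rfloor$ uniformly across the odd and even-$k$ cases (recall $k\ge5$ by Claim~\ref{Cl:Smallk}), since it is precisely the matching of the lost high vertices $|X_0|\le\lfloor\frac{k-1}{2}\rfloor$ against the drop in the threshold that drives the contradiction.
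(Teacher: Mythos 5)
Your proposal is correct and follows essentially the same route as the paper: delete a $(d+1)$-subset of $T$, observe that every high degree vertex outside $T\cup N(T)$ keeps its degree, verify $\phi(n-(d+1),d,k)=\phi(n,d,k)-\lfloor\frac{k-1}{2}\rfloor$ (using $k\ge 5$ so the remainder $r$ and hence $\epsilon$ are unchanged), and contradict the minimality of $n$ in condition (c). Your writeup is in fact slightly more explicit than the paper's, which states the threshold identity and the bound $n'>d$ (via Claim~\ref{Cl:n>2d+1}) without spelling out the case check.
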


\begin{proof}
Suppose that $T\cup N(T)$ contains at most $\lfloor\frac{k-1}{2}\rfloor$ high
degree vertices. Let $T'$ be any subset of $T$ with $|T'|=d+1$, and let
$G'$ be obtained from $G$ by deleting all vertices in $T'$. Then
$n':=|V(G')|=n-(d+1)>d$ and $G'$ has at least
$\phi-\left\lfloor\frac{k-1}{2}\right\rfloor=\phi(n',d,k)$ high degree vertices.
Thus $G'$ is a counterexample smaller than $G$ (which satisfies
$(a)$ and $(b)$ but violates $(c)$), a contradiction.
\end{proof}

Since $G$ is connected and has $\phi\geq 2$ high degree vertices,
there exist high-end paths in $G$.
Now we choose a high-end path $P$ in $G$ such that
the number of high degree vertices in $P$ is maximum, and subject to this, $|P|$ is maximum.
By Claim \ref{Cl:HighEndPath}, we have $|P|\leq k-2$.
Note that as $q\geq 2$, we have
$\phi\geq \lfloor \frac{k-1}{2}\rfloor q+1\geq k-1$.
So there must be some high degree vertex outside $V(P)$.

Let $u_1,u_2$ be the two end-vertices of $P$. We assign the
orientation of $P$ from $u_1$ to $u_2$.

\begin{claim}\label{Cl:PathDegreeS}
Let $S_i=N_{G-P}(u_i)$ for $i\in \{1,2\}$ and $S=S_1\cup S_2$.
Then any vertex in $S$ is a low degree vertex,
$S_1\cap S_2=\emptyset$, and $N_P(u_1)\cup \{u_1\}\subseteq V(P)\backslash (N_P(u_2))^+$.
In particular, we have $d_P(u_1)+d_P(u_2)\leq|P|-1$.
\end{claim}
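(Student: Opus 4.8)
The plan is to establish the three assertions in the order stated, deduce the ``in particular'' bound by counting, and rely throughout on the two-level extremality of $P$ (maximize the number of high vertices, then the length) together with the facts already in hand: Claims~\ref{Cl:HighEndPath}, \ref{Cl:NeighborLower}, \ref{Cl:Connected}, and the observation just made that $\phi\ge k-1$ forces a high vertex off $P$. The first assertion is a direct extension argument: if some $w\in S_i$ were high, then prepending $w$ at the $u_i$-end yields the path $wu_iPu_{3-i}$, whose other endpoint $u_{3-i}$ is still high, so it is a high-end path containing strictly more high vertices than $P$ (namely the new vertex $w\notin V(P)$), contradicting the primary maximality of $P$.

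For $S_1\cap S_2=\emptyset$ and for the non-crossing containment $N_P(u_1)\cup\{u_1\}\subseteq V(P)\setminus(N_P(u_2))^+$, I would assume the forbidden pattern and close up a cycle covering $V(P)$. If $w\in S_1\cap S_2$, then $C:=u_1Pu_2wu_1$ is a cycle on $V(P)\cup\{w\}$; if instead $u_1\sim p_i$ and $u_2\sim p_{i-1}$ for some index (which is exactly the negation of the containment), then $u_1p_ip_{i+1}\cdots u_2p_{i-1}\cdots p_1u_1$ is a cycle $C$ on $V(P)$ itself. In both cases $C$ runs through $u_1$ and $u_2$, has at most $|P|+1\le k-1$ vertices, and (since $w$, where present, is low by the first assertion) contains \emph{precisely} the high vertices of $P$. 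Because a high vertex $z$ lies outside $V(P)$ and $z\neq w$, we have $z\notin V(C)$, and connectivity of $G$ gives a shortest path $R$ from $z$ to $V(C)$ meeting $C$ in a single vertex $c$. The aim is to reroute $C$ together with $R$ into a high-end path that either carries more high vertices than $P$, or carries all high vertices of $P$ while being strictly longer than $P$; either outcome contradicts the choice of $P$. Concretely, one endpoint of the rerouted path is $z$ (high), and I would open $C$ at $c$ in the direction that leaves a high vertex as the far endpoint, using that $u_1,u_2$ (and the remaining high vertices) all sit on $C$, and using the first assertion to locate the low vertices, splitting into cases by the position of $c$ relative to $u_1$ and $u_2$.

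The final bound is then pure counting. The set $N_P(u_1)\cup\{u_1\}$ has exactly $d_P(u_1)+1$ elements, while $(N_P(u_2))^+$ has exactly $d_P(u_2)$ elements: each neighbor of $u_2$ in $P$ differs from $u_2$, hence lies strictly before the $u_2$-end and has a successor on $P$, and the successor map is injective. Both sets are contained in $V(P)$, and the containment just proved makes them disjoint, so $(d_P(u_1)+1)+d_P(u_2)\le |P|$, i.e.\ $d_P(u_1)+d_P(u_2)\le |P|-1$.

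I expect the rerouting step to be the main obstacle. A naive opening of $C$ may leave a low vertex as the new endpoint, so the new path need not be high-end, and the contradiction requires a genuinely high-end path. The argument must therefore exploit the precise positions of $u_1$ and $u_2$ on $C$ together with the fact (first assertion) that all off-$P$ neighbors of $u_1$ and of $u_2$ are low, almost certainly through a short case analysis on where the pendant path $R$ attaches; this is where the interplay between the two-level extremality of $P$ and the earlier claims has to be handled most carefully.
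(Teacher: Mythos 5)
Your proposal follows essentially the same route as the paper: the same one-move extension for the first assertion, the same two cycle constructions ($C=u_1Pu_2wu_1$ for a common off-path neighbor, and the standard crossing rotation for $v\in N_P(u_1)\cap(N_P(u_2))^+$), the same use of a high vertex $z$ off $P$ plus connectivity to attach a pendant path to $C$, and the same injectivity counting for the final bound. The one step you flag as "the main obstacle" is in fact immediate, and your specific tactic for it is the only flawed detail: you cannot in general "open $C$ at $c$ in the direction that leaves a high vertex as the far endpoint," since the far endpoint is forced to be one of the two $C$-neighbors of $c$ and both can be low (e.g.\ $c=u_1$ in the common-neighbor case, whose $C$-neighbors are the low vertex $w$ and possibly a low $u_1^+$). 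Instead, simply trim low vertices from the far end of the opened path until a high vertex is reached: trimming removes no high vertex, so the resulting path is high-end and contains $z$ together with \emph{all} high vertices of $P$ (as $V(P)\subseteq V(C)$), contradicting the primary maximality of $P$ with no case analysis on the position of $c$ and no appeal to the secondary (length) maximality. (By Claim~\ref{Cl:NeighborLower} no two low vertices are adjacent, so at most one vertex is trimmed, but even this is not needed.) This trimming is exactly what the paper's one-line conclusion "$C\cup P'$ contains a high-end path containing more high degree vertices than $P$" leaves implicit, so with that substitution your argument is complete and coincides with the paper's.
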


\begin{proof}
First any vertex in $S$ is a low degree vertex (as, otherwise,
say $s\in N_{G-P}(u_1)$ is high, then $P\cup u_1s$ contradicts the choice of $P$).
If $u_1,u_2$ has a common neighbor say $v$ outside $V(P)$,
then $v$ is a low degree vertex and let $C=P\cup u_2vu_1$;
if there exists a vertex $v\in N_P(u_1)\cap (N_P(u_2))^+$ such that $u_1v,u_2v^-\in E(G)$,
then let $C=(P-v^-v)\cup \{u_1v, u_2v^-\}$. In any case $C$ is a
cycle containing all vertices in $P$. As we just discussed before
this claim, there exists some high degree vertex $x$ outside $V(P)$ (and thus outside $V(C)$).
Now let $P'$ be any path in $G$ from $x$ to $C$ and internally disjoint
from $C$ (recall Claim~\ref{Cl:Connected} that $G$ is connected).
Then $C\cup P'$ contains a high-end path containing more high degree vertices
than $P$, a contradiction to the choice of $P$.
\end{proof}

In the following claims, we investigate more properties on the sets $S_1$ and $S_2$.

\begin{claim}\label{Cl:|S|}
Every vertex in $N(S_1)\cup N(S_2)$ is high and lies in $P$,
every vertex in $(N(S_1)\backslash\{u_1\})^-\cup (N(S_2)\backslash\{u_2\})^+$ is low,
and moreover, $|S|\geq d+3$.
\end{claim}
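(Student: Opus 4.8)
The plan is to prove the three assertions in order, driving every step through the extremal choice of $P$. Recall from Claim~\ref{Cl:PathDegreeS} that each vertex of $S=S_1\cup S_2$ is low, that $S_1\cap S_2=\emptyset$, and that $d_P(u_1)+d_P(u_2)\leq |P|-1$; recall also that $P$ was selected to first maximize the number of high degree vertices it contains and then its length $|P|$, and that $|P|\leq k-2$ by Claim~\ref{Cl:HighEndPath}. First I would show that every vertex of $N(S_1)\cup N(S_2)$ is high and lies on $P$. Fix $s\in S_1$ and $w\in N(s)$; since $s$ is low, Claim~\ref{Cl:NeighborLower} gives immediately that $w$ is high. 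To see $w\in V(P)$, suppose not. Then $w,s\notin V(P)$, so prepending $w,s$ to $P$ yields the high-end path $wsu_1Pu_2$ (its ends $w$ and $u_2$ are high), which contains all high degree vertices of $P$ together with the new high vertex $w$, contradicting the maximality of the number of high degree vertices on $P$. The identical argument applies to $S_2$.

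Next I would prove that every vertex in $(N(S_1)\setminus\{u_1\})^-\cup(N(S_2)\setminus\{u_2\})^+$ is low. Take $w\in N(S_1)\setminus\{u_1\}$; by the previous paragraph $w$ is high and lies on $P$, and there is a witness $s\in S_1$ with $u_1s,sw\in E(G)$. Assume for contradiction that $w^-$ is high. If $w^-\neq u_1$, then the path $w^-Pu_1\cup\{u_1s,\,sw\}\cup wPu_2$ (reversing the segment of $P$ between $u_1$ and $w^-$ and routing through $s$) has ends $w^-$ and $u_2$, both high; if instead $w^-=u_1$, i.e.\ $w=u_1^+$, then inserting $s$ into the edge $u_1u_1^+$ gives the high-end path $u_1swPu_2$. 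In either case the resulting path is a high-end path whose vertex set is $V(P)\cup\{s\}$, so it carries the same number of high degree vertices as $P$ yet is strictly longer, contradicting the choice of $P$. Hence $w^-$ is low, and the symmetric argument with successors disposes of $(N(S_2)\setminus\{u_2\})^+$.

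Finally, the bound $|S|\geq d+3$ should follow from a degree count. The neighbors of $u_i$ split into those on $P$, numbering $d_P(u_i)$, and those outside $P$, which form $S_i$; thus $d_G(u_i)=d_P(u_i)+|S_i|$. Using $S_1\cap S_2=\emptyset$ and the inequality from Claim~\ref{Cl:PathDegreeS},
\[
|S|=d_G(u_1)+d_G(u_2)-d_P(u_1)-d_P(u_2)\geq 2d-(|P|-1).
\]
Since $|P|\leq k-2$ and $d\geq k$, this gives $|S|\geq 2d-k+3\geq d+3$, as required.

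I expect the main obstacle to be the rerouting argument for the second assertion: one must check that the two segments of $P$ together with $s$ are genuinely vertex-disjoint so that the rerouted walk is a path, and must handle the degenerate endpoint case $w=u_1^+$ separately, since there the predecessor in question is the high vertex $u_1$ itself and the generic reversal degenerates. The first assertion is a direct consequence of Claim~\ref{Cl:NeighborLower} and the maximality of $P$, and the third is a routine computation once the degree identity $d_G(u_i)=d_P(u_i)+|S_i|$ is recorded.
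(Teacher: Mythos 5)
Your proposal is correct and takes essentially the same route as the paper: Claim~\ref{Cl:NeighborLower} together with the maximality of the number of high degree vertices on $P$ for the first assertion, the rerouted path $x^-Pu_1\cup u_1sx\cup xPu_2$ for the second, and the identical degree count $|S|\geq 2d-(k-3)\geq d+3$ for the third. The only cosmetic difference is that you treat the endpoint case $w=u_1^+$ separately, whereas the paper's single formula covers it automatically because the segment $w^-Pu_1$ degenerates to the single vertex $u_1$.
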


\begin{proof}
By Claim~\ref{Cl:NeighborLower}, any neighbor of a low degree vertex
is a high degree vertex. So for $i\in \{1,2\}$, every vertex in
$N(S_i)$ is high and must lie on $P$ (otherwise, $P$ can be extended to
a longer high-end path, which contradicts the choice of $P$).
Consider any $v\in S_1$ and $x\in N(v)\backslash\{u_1\}$. So $x\in V(P)\backslash\{u_1\}$.
If $x^-$ is high, then $P'=x^-Pu_1\cup u_1vx\cup xPu_2$ is a high-end
path such that $V(P)\subsetneq V(P')$, a contradiction to the choice of $P$.
Thus we conclude that all vertices in $(N(S_1)\backslash\{u_1\})^-$ are low, and similarly
all vertices in $(N(S_2)\backslash\{u_2\})^+$ are low.
By Claim~\ref{Cl:PathDegreeS}, one can get that
$|S|=|S_1|+|S_2|\geq (d(u_1)-d_P(u_1))+(d(u_2)-d_P(u_2))\geq 2d-(k-3)\geq d+3$.
\end{proof}

\begin{claim}\label{Cl:Consecutive}
There is at most one vertex $v\in V(P)$ such that $v,v^+\in N(S)$.
In particular, we have $v\in N(S_1)\backslash N(S_2)$ and $v^+\in N(S_2)\backslash N(S_1)$.
\end{claim}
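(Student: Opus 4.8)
The plan is to first pin down the membership of $v$ and $v^+$ (the ``in particular'' clause), and then rule out two such consecutive pairs by building from them a high-end path that is strictly longer than $P$ while containing exactly the same high degree vertices, contradicting the extremal choice of $P$.

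For the membership I would use the two low-degree facts recorded in Claim~\ref{Cl:|S|}: every vertex in $(N(S_1)\setminus\{u_1\})^-$ and every vertex in $(N(S_2)\setminus\{u_2\})^+$ is low. Since $N(S)\subseteq V(P)$ consists of high degree vertices, both $v$ and $v^+$ are high. If $v\in N(S_2)$, then, as $v\neq u_2$ (its successor $v^+$ exists), the vertex $v^+$ would be low, contradicting that $v^+\in N(S)$; hence $v\in N(S_1)\setminus N(S_2)$. Symmetrically, if $v^+\in N(S_1)$, then, as $v^+\neq u_1$ (its predecessor $v$ exists), the vertex $v$ would be low, which is again impossible; so $v^+\in N(S_2)\setminus N(S_1)$.

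For uniqueness, suppose toward a contradiction that there are two such vertices, $v$ and $w$, with $v$ preceding $w$ on $P$. The membership just proved gives $v,w\in N(S_1)$ and $v^+,w^+\in N(S_2)$; moreover $v^+\neq w$ (otherwise $w=v^+\in N(S_2)$, contradicting $w\in N(S_1)\setminus N(S_2)$), so in path order we have $\ldots,v,v^+,\ldots,w,w^+,\ldots$, splitting $P$ into three consecutive subpaths $A=u_1Pv$, $B=v^+Pw$ and $C=w^+Pu_2$. I would then choose $a'\in S_1$ with $a'\sim w$ (possible since $w\in N(S_1)$) and $b\in S_2$ with $b\sim v^+$; by the definitions of $S_1,S_2$ one also has $a'\sim u_1$ and $b\sim u_2$, while $a',b\notin V(P)$ and $a'\neq b$ because $S_1\cap S_2=\emptyset$. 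The crucial point is that these furnish two \emph{crossing} bridges joining distinct subpaths — $u_1$ to the interior vertex $w$ through $a'$, and $v^+$ to $u_2$ through $b$ — which allow me to reverse and splice the three subpaths into the single path
$$P'=v\,\overline{A}\,u_1\,a'\,w\,\overline{B}\,v^+\,b\,u_2\,\overline{C}\,w^+,$$
where $\overline{A},\overline{B},\overline{C}$ denote $A,B,C$ traversed backward. One checks that $P'$ uses only existing edges, visits each vertex once, and satisfies $V(P')=V(P)\cup\{a',b\}$ with both end-vertices $v,w^+\in N(S)$ high. Thus $P'$ is a high-end path with the same high degree vertices as $P$ but with $|P'|=|P|+2$, contradicting the maximality of $|P|$.

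The main obstacle is identifying the correct splice. A single consecutive pair yields only bridges \emph{parallel} to the subpaths (through the $S_1$-neighbor of $v$ and the $S_2$-neighbor of $v^+$), which merely close short cycles and cannot lengthen $P$ — consistent with one pair being permitted. It is precisely the second pair that supplies a bridge from $u_1$ to an \emph{interior} vertex $w$ of a later subpath (and, symmetrically, from $v^+$ to $u_2$), and the key realization is that these two cross-bridges, combined with reversing all three subpaths, produce a strictly longer high-end path.
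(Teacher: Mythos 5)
Your proof is correct, and while it follows the paper's overall strategy --- deduce the memberships $v\in N(S_1)\backslash N(S_2)$ and $v^+\in N(S_2)\backslash N(S_1)$ from the low-degree statements in Claim~\ref{Cl:|S|}, then refute a second consecutive pair by exhibiting a high-end path longer than $P$ containing exactly the same high degree vertices --- your splice is genuinely different from, and simpler than, the paper's. The paper picks for each pair a neighbor $x_i\in S_1$ of $v_i$ and $y_i\in S_2$ of $v_i^+$, keeps $u_1,u_2$ (essentially) as endpoints, and must split into four cases according to whether $x_1=x_2$ and $y_1=y_2$ (with special choices when $v_1=u_1$ or $v_2^+=u_2$), afterwards trimming the possibly low endpoints $u_1^+,u_2^-$ and bounding the length via indicator functions to reach $|P''|\geq |P|+2$. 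You instead use a single $a'\in S_1$ adjacent to $w$ and a single $b\in S_2$ adjacent to $v^+$, exploiting the edges $u_1a'$ and $bu_2$ that exist automatically from $S_1=N_{G-P}(u_1)$ and $S_2=N_{G-P}(u_2)$, and you relocate the endpoints to the high vertices $v$ and $w^+$; this yields the uniform, case-free path $P'=v\,\overline{A}\,u_1a'w\,\overline{B}\,v^+bu_2\,\overline{C}\,w^+$ with $V(P')=V(P)\cup\{a',b\}$, where $a'\neq b$ by $S_1\cap S_2=\emptyset$ (Claim~\ref{Cl:PathDegreeS}) and both are low, so $|P'|=|P|+2$ contradicts the secondary maximality in the choice of $P$. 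The construction remains valid in the degenerate situations $v=u_1$ or $w^+=u_2$, where $A$ or $C$ is a single vertex, and the trade-off is negligible since the claim only needs \emph{some} high-end path, not one ending at $u_1,u_2$. Your alternative justification that $v^+\neq w$ (because $w\in N(S_1)\backslash N(S_2)$ while $v^+\in N(S_2)$) is also sound, replacing the paper's appeal to the lowness of $v_2^-$.
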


\begin{proof}
First, both $v,v^+\in N(S)$ are high.
If $v\in N(S_2)$, then by Claim~\ref{Cl:|S|}, $v^+$ is low, a contradiction.
Thus $v\in N(S_1)\backslash N(S_2)$, and similarly, $v^+\in N(S_2)\backslash N(S_1)$.
Note that possibly $v=u_1$ or $v^+=u_2$.

Suppose for a contradiction that there are two such vertices, say $v_1$ and $v_2$.
Assume that $v_1\in V(u_1Pv_2)$.
Recall that $v_1,v_2\in N(S_1)$ and $v_1^+,v_2^+\in N(S_2)$.
We remark that $v_1^+\neq v_2$ (because $v_2^-$ is low by Claim~\ref{Cl:|S|}).
So $v_1, v_1^+, v_2, v_2^+$ appear on $P$ in order.
For $i\in \{1,2\}$, let $x_i\in S_1$ with $x_iv_i\in E(G)$ and $y_i\in S_2$ with $y_iv_i^+\in E(G)$ such that
if $v_1=u_1$ then choose $x_1=x_2$, and if $v_2^+=u_2$ then choose $y_1=y_2$.
Now we define a new path $P'$ as follows (see Figure \ref{figure2}):
$$P'=\left\{\begin{array}{ll}
  u_1Pv_1\cup v_1x_1v_2\cup v_2Pv_1^+\cup v_1^+y_1v_2^+\cup v_2^+Pu_2,  & \mbox{if }x_1=x_2\mbox{ and  }y_1=y_2;\\
  u_1^+Pv_1\cup v_1x_1u_1x_2v_2\cup v_2Pv_1^+\cup v_1^+y_1v_2^+\cup v_2^+Pu_2,  & \mbox{if }x_1\neq x_2\mbox{ and  }y_1=y_2;\\
  u_1Pv_1\cup v_1x_1v_2\cup v_2Pv_1^+\cup v_1^+y_1u_2y_2v_2^+\cup v_2^+Pu_2^-,  & \mbox{if }x_1=x_2\mbox{ and  }y_1\neq y_2;\\
  u_1^+Pv_1\cup v_1x_1u_1x_2v_2\cup v_2Pv_1^+\cup v_1^+y_1u_2y_2v_2^+\cup v_2^+Pu_2^-,  & \mbox{if }x_1\neq x_2\mbox{ and  }y_1\neq
  y_2.
\end{array}\right.$$
Then $V(P')=V(P)\cup \{x_1, x_2, y_1, y_2\}$, and the possible end-vertices $u_1^+$ and $u_2^-$ of $P'$ can be low degree vertices.
Let $P''$ be the path obtained from $P'$ by removing its low end-vertices (which only can be $u_1^+$ and/or $u_2^-$).
Note that $u_1^+$ (respectively, $u_2^-$) is an end-vertex of $P'$ if and only if $x_1\neq x_2$ (respectively, $y_1\neq y_2$).
By Claim~\ref{Cl:NeighborLower}, $P''$ is a high-end path and contains exactly the same high degree vertices as in $P$.
However, $P''$ is longer than $P$, as $|P''|\geq |P'|-\mathbf{1}_{x_1\neq x_2}-\mathbf{1}_{y_1\neq y_2}\geq |P|+2$,
where the indicator function $\mathbf{1}_{x_1\neq x_2}$ equals $1$ if $x_1\neq x_2$ and $0$ otherwise
(the definition of $\mathbf{1}_{y_1\neq y_2}$ is similar).
This contradicts the choice of $P$.
\end{proof}

\begin{figure}[h]
\begin{center}
\begin{picture}(290,120)
\put(0,60){\put(20,60){\line(1,0){100}} \put(20,60){\circle*{4}}
\put(30,60){\circle*{4}} \put(50,60){\circle*{4}}
\put(60,60){\circle*{4}} \put(80,60){\circle*{4}}
\put(90,60){\circle*{4}} \put(110,60){\circle*{4}}
\put(120,60){\circle*{4}} \put(30,30){\circle*{4}}
\put(110,30){\circle*{4}} \put(30,30){\line(-1,3){10}}
\put(110,30){\line(1,3){10}} \thicklines {\color{blue}
\put(30,30){\line(2,3){20}} \put(30,30){\line(5,3){50}}
\put(110,30){\line(-5,3){50}} \put(110,30){\line(-2,3){20}}
\put(20,60){\line(1,0){30}} \put(60,60){\line(1,0){20}}
\put(90,60){\line(1,0){30}}} \put(17,65){$u_1$} \put(29,65){$u_1^+$}
\put(45,65){$v_1$} \put(57,65){$v_1^+$} \put(73,65){$v_2$}
\put(85,65){$v_2^+$} \put(101,65){$u_2^-$} \put(115,65){$u_2$}
\put(27,22){$x_1=x_2$} \put(82,22){$y_1=y_2$}
%\put(32,5){$x_1=x_2,y_1=y_2$}
}

\put(140,60){\put(20,60){\line(1,0){100}} \put(20,60){\circle*{4}}
\put(30,60){\circle*{4}} \put(50,60){\circle*{4}}
\put(60,60){\circle*{4}} \put(80,60){\circle*{4}}
\put(90,60){\circle*{4}} \put(110,60){\circle*{4}}
\put(120,60){\circle*{4}} \put(20,30){\circle*{4}}
\put(40,30){\circle*{4}} \put(110,30){\circle*{4}}
\put(110,30){\line(1,3){10}} \thicklines {\color{blue}
\put(20,30){\line(0,1){30}} \put(20,30){\line(1,1){30}}
\put(40,30){\line(-2,3){20}} \put(40,30){\line(4,3){40}}
\put(110,30){\line(-5,3){50}} \put(110,30){\line(-2,3){20}}
\put(30,60){\line(1,0){20}} \put(60,60){\line(1,0){20}}
\put(90,60){\line(1,0){30}}} \put(17,65){$u_1$} \put(29,65){$u_1^+$}
\put(45,65){$v_1$} \put(57,65){$v_1^+$} \put(73,65){$v_2$}
\put(85,65){$v_2^+$} \put(101,65){$u_2^-$} \put(115,65){$u_2$}
\put(17,22){$x_1$} \put(37,22){$x_2$} \put(82,22){$y_1=y_2$}
%\put(32,5){$x_1\neq x_2,y_1=y_2$}
}

\put(0,-15){\put(20,60){\line(1,0){100}} \put(20,60){\circle*{4}}
\put(30,60){\circle*{4}} \put(50,60){\circle*{4}}
\put(60,60){\circle*{4}} \put(80,60){\circle*{4}}
\put(90,60){\circle*{4}} \put(110,60){\circle*{4}}
\put(120,60){\circle*{4}} \put(30,30){\circle*{4}}
\put(100,30){\circle*{4}} \put(120,30){\circle*{4}}
\put(30,30){\line(-1,3){10}} \thicklines {\color{blue}
\put(30,30){\line(2,3){20}} \put(30,30){\line(5,3){50}}
\put(100,30){\line(-4,3){40}} \put(100,30){\line(2,3){20}}
\put(120,30){\line(-1,1){30}} \put(120,30){\line(0,1){30}}
\put(20,60){\line(1,0){30}} \put(60,60){\line(1,0){20}}
\put(90,60){\line(1,0){20}}} \put(17,65){$u_1$} \put(29,65){$u_1^+$}
\put(45,65){$v_1$} \put(57,65){$v_1^+$} \put(73,65){$v_2$}
\put(85,65){$v_2^+$} \put(101,65){$u_2^-$} \put(115,65){$u_2$}
\put(27,22){$x_1=x_2$} \put(97,22){$y_1$} \put(117,22){$y_2$}
%\put(32,5){$x_1=x_2,y_1\neq y_2$}
}

\put(140,-15){\put(20,60){\line(1,0){100}} \put(20,60){\circle*{4}}
\put(30,60){\circle*{4}} \put(50,60){\circle*{4}}
\put(60,60){\circle*{4}} \put(80,60){\circle*{4}}
\put(90,60){\circle*{4}} \put(110,60){\circle*{4}}
\put(120,60){\circle*{4}} \put(20,30){\circle*{4}}
\put(40,30){\circle*{4}} \put(100,30){\circle*{4}}
\put(120,30){\circle*{4}} \thicklines {\color{blue}
\put(20,30){\line(0,1){30}} \put(20,30){\line(1,1){30}}
\put(40,30){\line(-2,3){20}} \put(40,30){\line(4,3){40}}
\put(100,30){\line(-4,3){40}} \put(100,30){\line(2,3){20}}
\put(120,30){\line(-1,1){30}} \put(120,30){\line(0,1){30}}
\put(30,60){\line(1,0){20}} \put(60,60){\line(1,0){20}}
\put(90,60){\line(1,0){20}}} \put(17,65){$u_1$} \put(29,65){$u_1^+$}
\put(45,65){$v_1$} \put(57,65){$v_1^+$} \put(73,65){$v_2$}
\put(85,65){$v_2^+$} \put(101,65){$u_2^-$} \put(115,65){$u_2$}
\put(17,22){$x_1$} \put(37,22){$x_2$} \put(97,22){$y_1$}
\put(117,22){$y_2$}
%\put(32,5){$x_1\neq x_2,y_1\neq y_2$}
}
\end{picture}\label{figure2}
\caption{Key steps in the proof of Claim~\ref{Cl:Consecutive}.}
\end{center}
\end{figure}
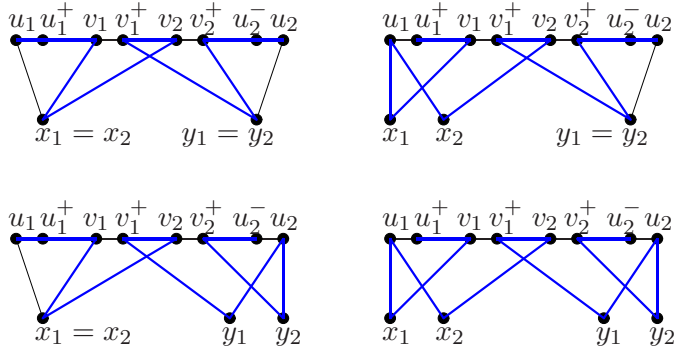

Now we can derive from Claims~\ref{Cl:|S|} and \ref{Cl:Consecutive} that
$|N(S)|\leq\frac{|P|+2}{2}\leq\frac{k}{2}.$
On the other hand, as $|S|\geq d+3$ (by Claim~\ref{Cl:|S|}),
$S\cup N(S)$ contains at least $\lfloor\frac{k+1}{2}\rfloor$ high degree vertices (by Claim~\ref{Cl:NSHighVertices}).
All vertices in $S$ are low (by Claim~\ref{Cl:PathDegreeS}), so $|N(S)|\geq \lfloor\frac{k+1}{2}\rfloor$.
Combining the above inequalities, we have
\begin{equation}\label{equ:N(S)}
\left\lfloor\frac{k+1}{2}\right\rfloor\leq |N(S)|\leq\frac{|P|+2}{2}\leq\frac{k}{2}.
\end{equation}
This indicates that $k$ is even, $|P|=k-2$, $|N(S)|=\frac{k}{2}$, and there is
exact one vertex $v\in V(P)$ satisfying Claim~\ref{Cl:Consecutive}.
Furthermore, by letting $k=2s$, we can express
$$P=a_0b_1a_1\cdots b_ja_ja_{j+1}b_{j+1}\cdots b_{s-1}a_{s-1}$$
for some $0\leq j\leq s-2$ such that $a_0=u_1,a_j=v,a_{j+1}=v^+, a_{s-1}=u_2$ and $N(S)=\{a_0, a_1,...,a_{s-1}\}$.

\begin{claim}\label{Cl:NeighborS}
We have $N(S_1)=\{a_0,...,a_j\}$ and $N(S_2)=\{a_{j+1},...,a_{s-1}\}$.
\end{claim}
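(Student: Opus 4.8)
The plan is to reduce the claim to the single inclusion $N(S_1)\subseteq\{a_0,\dots,a_j\}$. Indeed, reversing the orientation of $P$ interchanges $u_1\leftrightarrow u_2$ and $S_1\leftrightarrow S_2$ and moves the unique consecutive pair from $(a_j,a_{j+1})$ to position $s-2-j$, so the symmetric inclusion $N(S_2)\subseteq\{a_{j+1},\dots,a_{s-1}\}$ follows from the same argument applied to the reversed path. Once both inclusions are in hand, they must be equalities: we already know $N(S_1)\cup N(S_2)=N(S)=\{a_0,\dots,a_{s-1}\}$, and the two target sets $\{a_0,\dots,a_j\}$ and $\{a_{j+1},\dots,a_{s-1}\}$ are disjoint with exactly this union, so each $a_i$ is forced onto the correct side.

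To prove $N(S_1)\subseteq\{a_0,\dots,a_j\}$, suppose not and fix $a_m\in N(S_1)$ with $m\ge j+1$. Two easy observations pin down the range of $m$. Since $a_{j+1}\in N(S_2)\setminus N(S_1)$ by Claim~\ref{Cl:Consecutive}, we get $m\ge j+2$; and since $S_1\cap S_2=\emptyset$, no vertex of $S_1$ can be adjacent to $u_2$ (such a vertex, lying outside $P$, would belong to $S_2$), so $u_2=a_{s-1}\notin N(S_1)$ and hence $m\le s-2$. In particular the successor $a_m^+$ exists, and because the only consecutive pair in $N(S)$ is $(a_j,a_{j+1})$, the vertex $a_m^+$ is the unique internal (non-$a$) vertex lying between $a_m$ and $a_{m+1}$. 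Now pick $w\in S_1$ with $wa_m\in E(G)$ and $z\in S_2$ with $za_{j+1}\in E(G)$ (possible since $a_{j+1}\in N(S_2)$); recall $a_0w,a_{s-1}z\in E(G)$, that $w\ne z$ as $S_1\cap S_2=\emptyset$, and that $w,z\notin V(P)$.

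The heart of the argument is a crossing construction exploiting that $S_1$ reaches rightward to $a_m$ while $S_2$ reaches leftward to $a_{j+1}$. Splitting $P$ into the consecutive segments $a_0Pa_j$, $a_{j+1}Pa_m$ and $a_m^+Pa_{s-1}$, I form
$$R=a_jPa_0\cup a_0wa_m\cup a_mPa_{j+1}\cup a_{j+1}za_{s-1}\cup a_{s-1}Pa_m^+,$$
using the edges $a_0w,wa_m,a_{j+1}z,za_{s-1}$ together with reversed subpaths of $P$. One checks that $V(R)=V(P)\cup\{w,z\}$ with no repetitions (this is exactly where $j+2\le m\le s-2$ and the location of the gap are used), that the endpoints are $a_j$, a high degree vertex, and $a_m^+$, and that $|R|=|P|+2$. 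Since $w$ and $z$ are low, $R$ has precisely the same high degree vertices as $P$. If $a_m^+$ is high then $R$ is already a high-end path; otherwise deleting its single low endpoint $a_m^+$ leaves a high-end path whose new endpoint $a_{m+1}$ is high. Either way we obtain a high-end path with the same number of high degree vertices as $P$ but strictly longer, contradicting the maximality in the choice of $P$.

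The main obstacle is designing $R$ so that the net effect is a strict length increase: a construction using only one external vertex (say only $w$) produces a path which, after trimming its forced low endpoint, merely matches $|P|$, so one must insert two external low vertices simultaneously and arrange that at most one of them has to be trimmed. Getting the three $P$-segments and the two inserted vertices to assemble into a genuine path—rather than a walk that revisits $a_m$ or skips $a_{j+2},\dots,a_{m-1}$—is the delicate point, and it is precisely the crossing orientation of $w$ and $z$ (left endpoint joined to $a_m$, right endpoint joined to $a_{j+1}$) that makes the reversed middle segment $a_mPa_{j+1}$ legitimate.
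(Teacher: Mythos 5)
Your proof is correct, and it pursues the same broad strategy as the paper---an exchange argument that reroutes $P$ through one vertex of $S_1$ and one vertex of $S_2$ to produce a longer high-end path with the same high degree vertices---but the concrete surgery is genuinely different. The paper takes $a_\ell\in N(S_1)$ with $\ell\geq j+1$ \emph{minimal}; minimality (together with Claim~\ref{Cl:Consecutive}) forces $a_{\ell-1}\in N(S_2)$, and the rerouted path $a_jPu_1\cup u_1z_1a_\ell\cup a_\ell Pu_2\cup u_2z_2a_{\ell-1}\cup a_{\ell-1}Pa_{j+1}$ drops the low vertex $b_{\ell-1}$ from the \emph{interior}, so it gains length one and both endpoints $a_j,a_{j+1}$ are automatically high, with no trimming needed. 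You instead anchor the $S_2$-edge at the fixed vertex $a_{j+1}$ (available directly from Claim~\ref{Cl:Consecutive}), which lets you dispense with the minimal choice of $m$ and with the inference that a neighbor position of $a_m$ lies in $N(S_2)$; the price is that your path $R$ covers all of $V(P)$ and terminates at $a_m^+=b_m$, whose degree is not yet known at this stage (the lowness of all $b_i$ is deduced only \emph{after} Claim~\ref{Cl:NeighborS}), and your two-case analysis---either $a_m^+$ is high and $R$ itself contradicts the choice of $P$, or it is low and trimming it leaves a high-end path ending at $a_{m+1}$ of length $|P|+1$---handles this correctly. Your explicit verifications that $m\leq s-2$ (via $S_1\cap S_2=\emptyset$; the paper leaves the analogous $\ell\leq s-2$ implicit, since $\ell=s-1$ would put $z_1\in S_1\cap S_2$) and that $w\neq z$, $w,z\notin V(P)$ are all sound, and your symmetry reduction plus the derivation of the equalities from the two inclusions and $N(S)=\{a_0,\dots,a_{s-1}\}$ match the paper's opening step. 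Net effect: the paper's rerouting is slightly shorter; yours trades the minimality trick for an endpoint-trimming analysis, and both are valid.
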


\begin{proof}
It suffices to show that $N(S_1)\subseteq \{a_0,...,a_j\}$ and $N(S_2)\subseteq \{a_{j+1},...,a_{s-1}\}$.
By symmetry, we will only prove that $N(S_1)\subseteq \{a_0,...,a_j\}$.
Suppose not. Then there exists some $a_\ell\in N(S_1)$ with $\ell\geq j+1$,
and we may assume that subject to the condition $\ell\geq j+1$, $\ell$ is minimal.
By Claim~\ref{Cl:|S|}, we see that $(a_\ell)^-$ is a low degree vertex.
This implies that $\ell\geq j+2$ and $b_{\ell-1}=(a_\ell)^-$ is low.
By Claim \ref{Cl:|S|}, $a_{\ell-1}\notin N(S_1)$.
As $N(S)=\{a_0, a_1,...,a_{s-1}\}$, we have $a_{\ell-1}\in N(S_2)$.
Let $z_1\in S_1, z_2\in S_2$ be two vertices such that $z_1a_\ell, z_2a_{\ell-1}\in E(G)$.
Then $P':=a_jPu_1\cup u_1z_1a_\ell\cup a_\ell Pu_2\cup u_2z_2a_{\ell-1}\cup a_{\ell-1}Pa_{j+1}$ is a high-end path
such that $V(P')=(V(P)\backslash \{b_{\ell-1}\})\cup \{z_1,z_2\}$ and $|P'|=|P|+1$.
So $P'$ contains all high degree vertices of $P$ and is longer than $P$, a contradiction to the choice of $P$.
This proves Claim~\ref{Cl:NeighborS}.
\end{proof}

Hence by Claim~\ref{Cl:|S|}, every $a_i$ for $0\leq i\leq s-1$ is high and every $b_i$ for $1\leq i\leq s-2$ is low.

\begin{claim}\label{Cl:VertexinP}
For any $a_i\notin \{v,v^+\}$, every vertex $z\in N_{G-P}(a_i)$ is a
low degree vertex such that $N(z)\subseteq V(P)$.
On the other hand, every $b_i$ satisfies $N(b_i)\subseteq V(P)$.
\end{claim}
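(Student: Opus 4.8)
The plan is to argue by contradiction against the extremal choice of $P$. Recall that $P$ carries exactly $s:=k/2$ high degree vertices $a_0,\dots,a_{s-1}$ and was chosen to maximise this number; hence in every case below it suffices to construct a high-end path containing $s+1$ high degree vertices. The mechanism behind all of these constructions is Claim~\ref{Cl:NeighborS}: since $N(S_1)=\{a_0,\dots,a_j\}$ and $N(S_2)=\{a_{j+1},\dots,a_{s-1}\}$, while $S_1=N_{G-P}(u_1)$ and $S_2=N_{G-P}(u_2)$ consist of low off-path vertices by Claim~\ref{Cl:PathDegreeS}, the endpoint $u_1$ can be joined to any first-half vertex $a_\ell$ ($\ell\le j$) through a vertex of $S_1$, and symmetrically $u_2$ to any second-half $a_\ell$ through $S_2$. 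Such a ``jump'' lets me reroute a path while retaining all of $a_0,\dots,a_{s-1}$.

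First I would prove that every $z\in N_{G-P}(a_i)$ with $a_i\notin\{v,v^+\}$ is low. For the endpoints $a_0=u_1$ and $a_{s-1}=u_2$ this is immediate, since then $z\in S$ is low by Claim~\ref{Cl:PathDegreeS}. For an interior $a_i$ with $1\le i\le j-1$ (the range $j+2\le i\le s-2$ being symmetric via $u_2$ and $S_2$), both $P$-neighbours of $a_i$ are low $b$-vertices, so, assuming $z$ high, I reroute as follows: take the edge $z\,a_i$, run backwards along $P$ from $a_i$ to $u_1$, cross to $a_{i+1}$ through a vertex $x\in S_1$ adjacent to $a_{i+1}$ (which exists as $i+1\le j$), and run forwards along $P$ from $a_{i+1}$ to $u_2$. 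This walk discards only the single low vertex $b_{i+1}$ yet collects $z$ together with all of $a_0,\dots,a_{s-1}$, so it is a high-end path on $s+1$ high degree vertices, contradicting the choice of $P$. Here $x$ is automatically distinct from $z$, because $x$ is low and $z$ is high.

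Next I would bound the neighbourhoods. For the endpoint case $z\in S_1$, every neighbour of $z$ is high by Claim~\ref{Cl:NeighborLower}, hence lies in $N(S_1)\subseteq V(P)$ by Claim~\ref{Cl:|S|}, so $N(z)\subseteq V(P)$; the case $z\in S_2$ is identical. For a low off-path neighbour $z$ of an interior $a_i$, suppose $z$ had a neighbour $w\notin V(P)$; then $w$ is high by Claim~\ref{Cl:NeighborLower}, and prepending $w\,z$ to the reroute of the previous paragraph again produces $s+1$ high degree vertices. The only delicate point is that the $S_1$-connector at $a_{i+1}$ might be forced to equal $z$ itself; this occurs only when $z$ is the unique $S_1$-neighbour of $a_{i+1}$, whence $z\in S_1$, and then I instead take $w\,z\,u_1$ followed by all of $P$, once more collecting $s+1$ high degree vertices. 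Finally, for a low vertex $b_i$ with a neighbour $w\notin V(P)$ (again high by Claim~\ref{Cl:NeighborLower}), I split $P$ at $b_i$, attach the edge $w\,b_i$, and reconnect the two resulting halves through an $S_1$- or $S_2$-jump to the $P$-neighbour of $b_i$ on the far side; this keeps every $a_\ell$ and adds $w$, giving $s+1$ high degree vertices.

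The routine verification is that each reroute is a genuine path: the backward and forward segments occupy disjoint index ranges of $P$, and every injected off-path vertex is new because a high vertex and a low vertex can never coincide. The one genuine obstacle is the coincidence flagged above, where two low connectors might be the same vertex; I expect this to demand the most care, but it is dissolved exactly by the observation that the forced coincidence places $z$ in $S_1$ (resp. $S_2$), after which the trivial reroute $w\,z\,u_1$ together with $P$ closes the argument. By the left--right symmetry of the configuration, all second-half cases mirror the first-half ones verbatim.
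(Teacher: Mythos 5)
Your proposal is correct and follows essentially the same route as the paper's own proof: the identical $S_1$/$S_2$-jump mechanism via Claim~\ref{Cl:NeighborS}, the same reduction of the endpoint cases to Claims~\ref{Cl:PathDegreeS} and~\ref{Cl:|S|}, and the same path surgeries (dropping the single low vertex $b_{i+1}$ for an interior $a_i$, and splitting at $b_i$ with a jump to the far-side neighbour). The only difference is that you explicitly resolve the possible coincidence of $z$ with the $S_1$-connector, a degenerate case the paper's proof leaves implicit, and your fallback path $wzu_1\cup P$ handles it correctly.
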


\begin{proof}
First we consider $b_i$ for any $1\leq i\leq s-2$.
Without loss of generality, we may assume $1\leq i\leq j$.
Suppose that $b_i$ has a neighbor $z$ outside $V(P)$.
Then by Claim~\ref{Cl:NeighborLower}, $z$ is high.
Let $w\in S_1$ be such that $wa_i\in E(G)$.
Now $zb_i\cup b_iPu_1\cup u_1wa_i\cup a_iPu_2$ is a high-end
path containing more high degree vertices than $P$, a contradiction.
This proves $N(b_i)\subseteq V(P)$.
Now consider any $a_i\notin \{v,v^+\}$.
We are done by Claims~\ref{Cl:PathDegreeS} and \ref{Cl:|S|} if $a_i\in\{u_1,u_2\}$.
Hence, without loss of generality assume that $1\leq i\leq j-1$.
Consider $z\in N_{G-P}(a_i)$.
Let $w'\in S_1$ be such that $w'a_{i+1}\in E(G)$
and let $P':=za_i\cup a_iPu_1\cup u_1w'a_{i+1}\cup a_{i+1}Pu_2$.
If $z$ is high, then $P'$ is a high-end path containing more
high degree vertices than $P$, a contradiction.
So any such $z$ must be low. Suppose for a contradiction
that $z$ has a neighbor $z'$ outside $V(P)$.
Then by Claim~\ref{Cl:NeighborLower}, $z'$ is high.
So $z'z\cup P'$ is a high-end path containing more high
degree vertices than $P$, again a contradiction.
Thus we can conclude that $N(z)\subseteq V(P)$.
\end{proof}

We are ready to complete the proof of Theorem~\ref{Thm:Main}.
Recall that $G$ has a high degree vertex (say $x$) outside $V(P)$.
Let $U=V(P)\backslash\{v,v^+\}$.
We can derive from Claims~\ref{Cl:|S|} and \ref{Cl:VertexinP}
that $\{v,v^+\}$ is a 2-cut of $G$ separating the vertex $x$ from the set $S\cup U$.
Let $D$ be the union of (at most two) components in $G-\{v,v^+\}$ containing vertices in $S\cup U$.
Claim~\ref{Cl:VertexinP} also shows that all high degree vertices in $D\cup N(D)$
are those in $V(P)$, i.e., vertices in $N(S)=\{a_0,a_1,...,a_{s-1}\}$.
By Claim~\ref{Cl:PathDegreeS}, we have
$$|S\cup V(P)|=|S_1|+|S_2|+|P|\geq d_{G-P}(u_1)+d_{G-P}(u_2)+ (d_P(u_1)+d_P(u_2))+1=d(u_1)+d(u_2)+1\geq 2d+1.$$
Either $v$ or $v^+$ has some neighbor not in $S\cup V(P)$.
Without loss of generality we assume that $v$ has some neighbor not in $S\cup V(P)$.
By Claim~\ref{Cl:Connected}, $v$ has degree $d$. Also note that $vv^+\in E(G)$.
So $v$ has at most $d-2$ neighbors in $S\cup U$.
Set $S'=(S\cup U)\backslash N(v).$
Then using $|S\cup V(P)|\geq 2d+1$, we have
$$|S'|\geq |S\cup U|-(d-2)=(|S\cup V(P)|-2)-(d-2)\geq d+1.$$
By Claim \ref{Cl:NSHighVertices}, $S'\cup N(S')$ contains at least $s=\frac{k}{2}$ high degree vertices.
However, as $S'$ is a subset in $D$, by definition we see $S'\cup N(S')\subseteq (D\cup N(D))\backslash \{v\}$.
We have pointed out that all high degree vertices in $D\cup N(D)$ are $a_0,a_1,...,a_{s-1}$.
So $S'\cup N(S')$ contains at most $s-1$ high degree vertices.
This final contradiction finishes the proof of Theorem~\ref{Thm:Main}. \qed

\section{On two related questions}\label{Sec:ErdosProb}
In this section, we consider two questions related to
Conjecture~\ref{Conj:EFSS} that are raised by Erd\H{o}s,
Faudree, Schelp and Simonovits in \cite{EFSS89}.
We will provide better constructions than the ones given
in \cite{EFSS89}, which give (negative and positive) answers
to their questions.

It is natural to consider the analog of Definition~\ref{Def:phi} for long cycles.
For integers $n>d\geq k\geq 2$, define $\theta(n,d,k)$ to be
the smallest integer $\theta$ such that every $n$-vertex graph
with at least $\theta$ vertices of degree at least $d$ contains a cycle on at least $k+1$ vertices.
In this language, the well-known Dirac's theorem \cite{D52} states that $\theta(n,d,d)\leq n$.
Improving Dirac's theorem, Woodall \cite{W75} proved that
$\theta(n,d,d)\leq \frac{(d+2)(n-1)}{2d}$ if $d$ is even and $\theta(n,d,d)\leq \frac{d(n-2)}{2(d-1)}$ otherwise;
while for the general case, he \cite{W75} showed that $\theta(n,d,k)\leq \frac{(k+3)(n-1)}{2d}$.
Recall the graph $H_{d,k+1}$ and observe that it contains no cycles on at least
$k+1$ vertices and has $d+1$ vertices in total, where $\lfloor\frac{k}{2}\rfloor$ vertices have degree $d$
(call them {\it high degree} vertices) and all other vertices have
degree $\lfloor\frac{k}{2}\rfloor$ (call them {\it low degree} vertices).
By considering the graphs consisting of blocks $H_{d,k+1}$,
the authors of \cite{EFSS89} raised the following question:
Is it possible that
$\theta(n,d,k)\leq \lfloor\frac{k}{2}\rfloor \lfloor\frac{n-1}{d}\rfloor+2$?

In the following proposition, we give a negative answer to this question.
\begin{prop}\label{lem:c}
For any integer $k\geq 2$,
there exist infinitely many integers $d$ such that the following holds.
There exists some constant $c=c(d,k)>0$ such that $\theta(n,d,k)>(\lfloor\frac{k}{2}\rfloor +c)\cdot \frac{n-1}{d}$ for infinite integers $n$.
\end{prop}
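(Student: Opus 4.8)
The plan is to construct, for suitable $d$, an explicit family of $n$-vertex graphs that avoid cycles on $k+1$ or more vertices while carrying strictly more than $\lfloor\frac{k}{2}\rfloor\frac{n-1}{d}$ high degree vertices, thereby forcing $\theta(n,d,k)$ above the bound conjectured in \cite{EFSS89}. The natural starting point is the block graph $H_{d,k+1}$, which has $d+1$ vertices and $\lfloor\frac{k}{2}\rfloor$ vertices of degree $d$; gluing $q$ disjoint copies gives $q(d+1)$ vertices and $q\lfloor\frac{k}{2}\rfloor$ high degree vertices, the configuration behind the original question. To beat the ratio $\lfloor\frac{k}{2}\rfloor/d$ I need a single building block that is \emph{more efficient}: one that packs the same (or nearly the same) number of high degree vertices into fewer total vertices, or one extra high degree vertex at negligible vertex cost, without creating a long cycle.

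First I would search for the right value of $d$. The key observation is that the low degree vertices in $H_{d,k+1}$ have degree exactly $\lfloor\frac{k}{2}\rfloor$, so when $d=\lfloor\frac{k}{2}\rfloor$ the high and low vertices coincide in degree and the block $K_{d+1}=H_{d,k+1}$ already has \emph{all} $d+1$ vertices of degree $d$ while its longest cycle has length $d+1$. This coincidence, or a nearby one where $d$ is chosen so that an augmented block can share vertices between the "clique part" of consecutive copies, is what creates room to improve the constant. Concretely I would take a base block $B$ on $v(B)$ vertices with $h(B)$ high degree vertices and no cycle longer than $k$, and then form $G_q$ by taking $q$ copies of $B$ overlapping or joined in a tree-like (block) structure so that $n=q\cdot v(B)-O(1)$ and the high-vertex count is $q\cdot h(B)-O(1)$. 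The ratio of high degree vertices to $\frac{n-1}{d}$ then tends to $d\cdot h(B)/v(B)$, and the task reduces to exhibiting one block with $d\cdot h(B)/v(B)>\lfloor\frac{k}{2}\rfloor$.

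The two things to verify for the block are (1) that it contains no cycle on $k+1$ vertices and (2) that it realizes a ratio $d\,h(B)/v(B)$ strictly exceeding $\lfloor\frac{k}{2}\rfloor$. For (1) I would keep the structure close to $H_{d,k+1}$: a small "core" clique of size about $\lfloor\frac{k}{2}\rfloor$ whose removal disconnects the graph into low-density pieces forces every cycle to repeatedly pass through the core and hence stay short, exactly as for $H_{d,k+1}$. For (2), the efficiency gain comes from letting a few low degree vertices of $H_{d,k+1}$ be \emph{promoted} to high degree by adding controlled edges (or by the overlap between adjacent blocks supplying the missing degree) while keeping the cycle structure intact. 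Setting $c=d\,h(B)/v(B)-\lfloor\frac{k}{2}\rfloor>0$ and letting $n=q\,v(B)-O(1)\to\infty$ along $q\to\infty$ then yields the claimed strict inequality for infinitely many $n$, for each of the infinitely many admissible $d$.

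The \textbf{main obstacle} is the tension between the two requirements: any modification that raises the number of high degree vertices tends to add edges, and added edges threaten to create a cycle of length at least $k+1$. The delicate point is therefore the girth-of-long-cycles analysis of the block — I must show that the augmented block still has no $(k+1)$-cycle, and this is where the cut-structure argument (the core set being a small separator, analogous to the role of $K_{\lfloor(k-1)/2\rfloor}$ in $H_{d,k}$) must be made precise. Once the block is pinned down, the asymptotic computation of the ratio and the choice of the explicit constant $c$ are routine.
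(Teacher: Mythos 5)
Your overall strategy---build a single block that packs high-degree vertices more efficiently than $H_{d,k+1}$, chain copies in a tree-of-blocks structure, and read off the constant $c$ from the limiting ratio---has the right shape, and your parenthetical remark that the overlap between adjacent blocks could ``supply the missing degree'' is in fact the exact mechanism the paper uses. But as written the proposal has a genuine gap: it never exhibits the block, never fixes $d$, and the one concrete value you float, $d=\lfloor\frac{k}{2}\rfloor$, is inadmissible because $\theta(n,d,k)$ is only defined for $d\geq k$. More importantly, the route you actually commit to---promoting low-degree vertices of $H_{d,k+1}$ to high degree ``by adding controlled edges'' and then carrying out a ``delicate girth-of-long-cycles analysis''---is left entirely unexecuted, and you yourself flag it as the main obstacle. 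That obstacle is real if you add edges; the point of the construction is to avoid it altogether.

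The paper sidesteps it by adding no edges at all. Take $d=(1+\alpha)\lfloor\frac{k}{2}\rfloor$ for a positive integer $\alpha$ ($\alpha\geq 2$ when $k$ is odd, so that $d\geq k$; this gives infinitely many admissible $d$). A low-degree vertex of $H_{d,k+1}$ has degree exactly $\lfloor\frac{k}{2}\rfloor$, so identifying one low-degree vertex from each of $1+\alpha$ copies of $H_{d,k+1}$ produces a merged vertex of degree exactly $(1+\alpha)\lfloor\frac{k}{2}\rfloor=d$, which becomes high for free. Chaining $\beta$ bundles of $\alpha$ copies onto an initial copy $H_0$, identifying a low-degree vertex of each bundle with a low-degree vertex of the previous piece, yields a connected graph on $n=1+d+\alpha\beta d$ vertices in which every block is still literally $H_{d,k+1}$---so there is nothing to verify about long cycles---while each of the $\beta$ cut vertices is an additional high-degree vertex. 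The count is $(1+\alpha\beta)\lfloor\frac{k}{2}\rfloor+\beta=\frac{n-1}{d}\lfloor\frac{k}{2}\rfloor+\frac{n-(d+1)}{\alpha d}$ high-degree vertices with no cycle on $k+1$ or more vertices, which gives the claimed bound with $c$ just under $\frac{1}{\alpha}$. You should restructure your argument around this edge-free identification; the ``add edges and re-verify the cycle structure'' branch is both unfinished and unnecessary.
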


\begin{proof}
We show a slightly stronger statement:
Let $\alpha, \beta$ be positive integers such that $d=(1+\alpha)\lfloor\frac{k}{2}\rfloor$ and $n=1+d+\alpha \beta d$ satisfy $n>d\geq k\geq 2$.
Then $\theta(n,d,k)\geq \frac{n-1}{d} \lfloor\frac{k}{2}\rfloor +\frac{n-(d+1)}{\alpha d}$+1.

We construct an $n$-vertex graph $G$ as follows.
Let $H_0$ be a copy of $H_{d,k+1}$ with a low degree vertex $v_0$.
For each $1\leq i\leq \beta$, let $H_i$ be obtained from $\alpha$ copies of $H_{d,k+1}$ by identifying one low degree vertex from each copy of $H_{d,k+1}$ (call the resulting vertex $u_i$);
let $v_i$ be a low degree vertex in $H_i$ other than $u_i$.
Finally, let $G$ be obtained from $H_0,H_1,\ldots,H_\beta$ by
identifying $v_{i-1}$ and $u_i$ for all $1\leq i\leq \beta$.
Since each block of $G$ is a copy of $H_{d,k+1}$, we see that $G$ contains no cycles of at least $k+1$ vertices.
However, $G$ has $(1+\alpha\beta)\lfloor\frac{k}{2}\rfloor+\beta=\frac{n-1}{d} \lfloor\frac{k}{2}\rfloor+\frac{n-(d+1)}{\alpha d}$ vertices of degree at least $d$.
This proves the proposition.
\end{proof}

Note that in the above proof, one can take $\alpha\geq 1$ for even $k$ and $\alpha\geq 2$ for odd $k$.
For the cases $d>k$ in Lemma~\ref{lem:c}, the constant $c$ can be taken up to $\frac{1}{2}$ (i.e., when it corresponds to $\alpha=2$).

Another question concerned in \cite{EFSS89} is the restricted version of
Conjecture~\ref{Conj:EFSS} when the graph $G$ is assumed to be connected.
For positive integers $n>d\geq k$, define $\psi(n,d,k)$ to be the smallest integer $\psi$ such that
every $n$-vertex connected graph with at least $\psi$ vertices of
degree at least $d$ contains a path $P_{k+1}$ on $k+1$ vertices.
Erd\H{o}s et al. \cite{EFSS89} observed that
the graph, obtained from $\lfloor\frac{n-1}{d}\rfloor$ copies of
$H_{d,\lceil\frac{k}{2}\rceil+1}$ by identifying at a fixed high
degree vertex of each $H_{d,\lceil\frac{k}{2}\rceil+1}$, contains no $P_{k+1}$.
This gives that
$\psi(n,d,k)\geq \lfloor\frac{k-3}{4}\rfloor\lfloor\frac{n-1}{d}\rfloor+2,$
which is approximately a half of the number of high degree vertices in Conjecture~\ref{Conj:EFSS} (as $k\to \infty$).
They \cite{EFSS89} asked if there is a better construction.
We show that it is possible to improve the leading coefficient
of $n$ by a positive constant factor in the above lower bound of $\psi(n,d,k)$.

\begin{prop}\label{lem:c2}
For any integer $k\geq 7$, there exist infinitely many integers $d$ such that the following holds.
There exists some constant $c'=c'(d,k)>0$ such that
$\psi(n,d,k)>(\lfloor\frac{k-3}{4}\rfloor +c')\cdot\frac{n-1}{d}$
for infinite integers $n$.
\end{prop}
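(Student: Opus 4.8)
The plan is to imitate the construction used in Proposition~\ref{lem:c}, but with the connectivity-friendly block $H_{d,\lceil\frac{k}{2}\rceil+1}$ replacing $H_{d,k+1}$, and to arrange these blocks in a tree-like fashion so that the whole graph stays connected while still avoiding $P_{k+1}$. First I would record the elementary properties of the block $B:=H_{d,\lceil\frac{k}{2}\rceil+1}$: it has $d+1$ vertices, its longest path has exactly $\lceil\frac{k}{2}\rceil+1$ vertices (i.e.\ $2\lfloor\frac{\lceil k/2\rceil}{2}\rfloor+1$ or so, to be pinned down from the structure of $H_{d,\ell}$), it has $\lfloor\frac{\lceil k/2\rceil}{2}\rfloor$ high degree vertices, and it is connected with low degree vertices available as gluing points. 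The point of choosing $\lceil\frac{k}{2}\rceil+1$ rather than $k+1$ is that when two such blocks are glued at a single cut-vertex, the longest path crossing the cut uses at most $(\lceil\frac{k}{2}\rceil+1)+(\lceil\frac{k}{2}\rceil+1)-1 = 2\lceil\frac{k}{2}\rceil+1\le k+1$ vertices, so a path through the cut can have at most $k+1$ vertices; the construction must ensure it is strictly fewer, which forces the careful bookkeeping described below.

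Next I would build the graph. Fix parameters $\alpha,\beta$ analogous to those in Proposition~\ref{lem:c}, choose $d$ of the form that makes $\lfloor\frac{\lceil k/2\rceil}{2}\rfloor$ divide cleanly into $d$ (this is the source of the ``infinitely many $d$'' clause), and assemble a rooted tree of blocks: take one root copy of $B$, and at each of $\beta$ levels attach $\alpha$ new copies of $B$ by identifying a low degree vertex of each new copy with a designated low degree vertex of the previous level, exactly as the $H_i$ and the identification of $v_{i-1}$ with $u_i$ were handled before. The resulting graph $G$ is connected (it is glued along cut-vertices into a single component) and, since every block is a copy of $B$ and every path in $G$ either stays inside one block or crosses at most the necessary cut-vertices, the longest path has at most $k$ vertices, so $G$ contains no $P_{k+1}$. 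Counting high degree vertices against $n$ then yields a lower bound of the shape $(\lfloor\frac{k-3}{4}\rfloor+c')\cdot\frac{n-1}{d}$, where $c'$ comes from the extra high degree vertices contributed by the branching (the $\alpha$-fold fan-out) that the single-chain construction of Erd\H{o}s et al.\ did not exploit.

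The main obstacle is the path-length analysis at the cut-vertices: I must verify that no path of $G$ ever reaches $k+1$ vertices even though a path may traverse several blocks in succession through the tree. The key subtlety is that a longest path can enter and leave a block only through its (low degree) cut-vertices, and I would argue that within any single block $B$ the longest path between two prescribed low degree vertices, or between a low degree vertex and the block's interior, is short enough that concatenating block-paths across cut-vertices never accumulates to $k+1$ vertices. Here the choice of the index $\lceil\frac{k}{2}\rceil+1$ is exactly tuned so that two block-lengths minus the shared cut-vertex stay at most $k$; I would make this precise by analyzing the bipartite-like structure of $H_{d,\ell}$ (a clique joined to an independent set) and showing any path alternates into and out of the small clique at most $\lfloor\frac{\lceil k/2\rceil}{2}\rfloor$ times. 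Once this local bound is established, a short induction on the tree of blocks propagates it to all of $G$, and the final inequality follows by substituting the counts of vertices and high degree vertices into the stated bound, completing the proof.
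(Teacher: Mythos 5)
Your construction has a genuine gap: a \emph{path}, unlike a cycle, can cross cut-vertices, so gluing copies of $B=H_{d,\lceil k/2\rceil+1}$ into a chain of depth $\beta$ (as in Proposition~\ref{lem:c}) produces paths that traverse many blocks in succession and grow without bound as $\beta$ grows. Even locally the arithmetic fails. The longest path in $H_{d,\ell}$ ending at a prescribed low degree vertex has $2\lfloor\frac{\ell-1}{2}\rfloor+1$ vertices, so two copies of $B$ identified at a low degree vertex already contain a path on $4\lfloor\frac{\lceil k/2\rceil}{2}\rfloor+1$ vertices, which equals $k+1$ when $k\equiv 0\pmod 4$; and your stated bound ``$2\lceil\frac{k}{2}\rceil+1\le k+1$'' is both incorrect for odd $k$ and, where it holds with equality, exactly the forbidden path length (you need every path to have at most $k$ vertices, not at most $k+1$). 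The block $H_{d,\lceil k/2\rceil+1}$ is the one Erd\H{o}s et al.\ use, but they identify the copies at a single \emph{high} degree vertex, which is what keeps the cross-block paths short; it is not compatible with low-degree gluing points or with a chain of positive depth.

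The paper's proof resolves this differently on both counts. It takes the much smaller block $H^*=H_{d,\lceil\frac{k}{2}\rceil-1}$ (longest path roughly $k/2-1$ vertices, only $\lfloor\frac{k-3}{4}\rfloor$ high degree vertices each), forms $\beta$ ``hubs'' $H_1,\dots,H_\beta$ by identifying one low degree vertex from each of $\alpha$ copies of $H^*$ into a vertex $u_i$, and attaches all hubs to a single central star $K_{1,\beta}$. The resulting graph has diameter so small that any path meets at most two copies of $H^*$ plus the star center, which is what rules out $P_{k+1}$. Crucially, the gain $c'$ does \emph{not} come from packing more high degree vertices into each block (the blocks here carry no more than in the Erd\H{o}s et al.\ construction); it comes from the $\beta+1$ \emph{additional} high degree vertices created by the gluing itself: with $d=1+\alpha\lfloor\frac{k-3}{4}\rfloor$ each hub $u_i$ acquires degree exactly $d$, and the star center has degree $\beta\ge d$. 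Your proposal attributes the gain to the $\alpha$-fold fan-out of blocks, which by itself does not change the ratio of high degree vertices to $n$; without the mechanism of turning the identification vertices into high degree vertices, the count does not beat $\lfloor\frac{k-3}{4}\rfloor\cdot\frac{n-1}{d}$ by a constant factor.
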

\begin{proof}
Let $\alpha\geq 2$ and $\beta\geq d$ be any integers.
Let $d=1+\alpha\lfloor\frac{k-3}{4}\rfloor$ and $n=1+\beta(1+\alpha d)$.
Note that the graph $H^*=H_{d,\lceil\frac{k}{2}\rceil-1}$ contains
$\lfloor\frac{k-3}{4}\rfloor$ vertices of degree $d$,
and all other vertices have degree $\lfloor\frac{k-3}{4}\rfloor$ (call them {\it low} degree vertices).
We will construct an $n$-vertex connected graph $G$ to show that
$\psi(n,d,k)\geq \lfloor\frac{k-3}{4}\rfloor \frac{n-1}{d}+\left(1-\frac{1}{d}\lfloor\frac{k-3}{4}\rfloor\right)\frac{n-1}{1+\alpha d}+2$
as follows. Let $H_0$ be a star $K_{1,\beta}$ with leaves $v_i$ for $1\leq i\leq \beta$.
For each $1\leq i\leq \beta$, let $H_i$ be obtained from $\alpha$ copies of
$H^*$ by identifying one low degree vertex from each $H^*$ (call the resulting vertex $u_i$).
Now let $G$ be obtained from $H_0, H_1, ..., H_\beta$ by identifying
$v_i$ and $u_i$ for each $1\leq i\leq \beta$.
Then $G$ contains no path $P_{k+1}$ and has
$\alpha\beta\lfloor\frac{k-3}{4}\rfloor+\beta+1=\left(\frac{n-1}{d} -\frac{\beta}{d}\right) \lfloor\frac{k-3}{4}\rfloor+ \beta+1
=\lfloor\frac{k-3}{4}\rfloor \frac{n-1}{d}+\left(1-\frac{1}{d}\lfloor\frac{k-3}{4}\rfloor\right)\frac{n-1}{1+\alpha d}+1$
vertices of degree at least $d$, as desired.
\end{proof}

It would be very interesting to determine the functions $\theta(n,d,k)$ and $\psi(n,d,k)$ exactly.

\bigskip

\noindent {\bf Acknowledgements.}
The authors are very grateful to Professor Douglas R. Woodall for sending a copy of \cite{W75}.
Binlong Li was partially supported by NSFC grant 12071370. Jie Ma was partially supported by the
National Key R and D Program of China 2020YFA0713100, NSFC grants 11622110 and 12125106, and Anhui Initiative in Quantum Information
Technologies grant AHY150200. Bo Ning was partially supported by NSFC grant 11971346.


\begin{thebibliography}{10}
\bibitem{BLW00}
C. Bazgan, H. Li and M. Wo\'{z}niak,
On the Loebl-Koml\'{o}s-S\'{o}s conjecture.
\emph{J. Graph Theory} {\bf 34} (2000), no. 4, 269--276.

\bibitem{D52}
G.A. Dirac, Some theorems on abstract graphs,
\emph{Proc. London Math. Soc.} {\bf 2} (1952) 69--81.



\bibitem{EFSS89}
P. Erd\H{o}s, R.J. Faudree, R.H. Schelp and M. Simonovits,
An extremal result for paths, Graph theory and its applications: East and West (Jinan, 1986), 155--162,
\emph{Ann. New York Acad. Sci., 576}, New York Acad. Sci., New York, 1989.


\bibitem{EG59}
P. Erd\H{o}s and T. Gallai, On maximal paths and circuits of graphs, \emph{Acta Math. Acad.
Sci. Hungar.}, {\bf 10} (1959), 337--356.

\bibitem{FRS97}
R.J. Faudree, C.C. Rousseau and R.H. Schelp,
Problems in graph theory from Memphis. The mathematics of Paul Erd\H{o}s, II, 7--26,
Algorithms Combin., 14, Springer, Berlin, 1997.

\bibitem{J81}
B. Jackson, Cycles in bipartite graphs,
\emph{J. Combin. Theory Ser. B} {\bf 30} (1981), no. 3, 332--342.



\bibitem{KLZ}
A. Kostochka, R. Luo and D. Zirlin,
Super-pancyclic hypergraphs and bipartite graphs,
\emph{J. Combin. Theory Ser. B}, {\bf 145} (2020), 450--465.



\bibitem{LRWZ12}
B. Li, Z. Ryj\'{a}\v{c}ek, Y. Wang and S. Zhang,
\emph{Pairs of heavy subgraphs for Hamiltonicity of 2-connected graphs}
\emph{SIAM J. Discrete Math.} {\bf 26} (2012), no. 3, 1088--1103.


\bibitem{W75}
D. R. Woodall, Maximal circuits of graphs II, \emph{Studia Sci. Math. Hungar.} {\bf 10} (1975), 103--109.

\end{thebibliography}
\end{document}